\documentclass[12pt]{amsart}

\usepackage{}
 \usepackage{mathrsfs}
\usepackage{amsfonts, amssymb,amsmath, amsthm, amsxtra, latexsym, amscd}
\usepackage[latin1]{inputenc}

\addtolength{\oddsidemargin}{-1.30cm}
\addtolength{\evensidemargin}{-1.30cm}
\addtolength{\textwidth}{2.80cm}
\addtolength{\textheight}{2.3cm}
\addtolength{\topmargin}{-0.5cm}

\def\draft{\centerline{(Draft {\the \day}/{\the\month} \the \year.)}}

\theoremstyle{plain}
\newtheorem*{thm*}{Theorem}

\newtheorem{thm}{Theorem}[section]
\newtheorem{cor}[thm]{Corollary}
\newtheorem{lem}[thm]{Lemma}
\newtheorem{prop}[thm]{Proposition}

\newtheorem{rem}[thm]{Remark}
\newtheorem{defi}[thm]{Definition}
 \pagestyle{myheadings}
\numberwithin{equation}{section}

\bigskip

\def\refn#1.#2{\expandafter\def\csname#1\endcsname{[#2]}}
\def\refnr#1.{\csname#1\endcsname}

\begin{document}

\baselineskip  1.25pc

\title[essential  normality  of   principal  submodules]
{A harmonic analysis approach to essential  normality  of   principal  submodules
}
\author{Ronald G. Douglas and Kai Wang }
\address{  Department of Mathematics,
Texas A\&M University, College Station, Texas, USA}
\email{rdouglas@math.tamu.edu }
\address{School of Mathematical Sciences,
Fudan University, Shanghai, P. R. China}
\email{kwang@fudan.edu.cn}

 \subjclass[2010]{Primary 47A13;
Secondary 46E22,46H25, 47A53.}
\keywords{essentially normal, Hilbert module, Arveson's conjecture, covering lemma }
\begin{abstract}
   Guo and the second author have  shown that the closure $[I]$ in the Drury-Arveson space
 of a homogeneous principal  ideal $I$ in $\mathbb{C}[z_1,\cdots,z_n]$ is essentially normal.
  In this note,
  the authors
    extend
   this result to the closure of
  any principal polynomial ideal in the Bergman space. In particular,    the commutators and cross-commutators of the restrictions of
  the multiplication operators are shown to be in the Schatten $p $-class for $p>n$.
  The same is true for modules generated by  polynomials with vector-valued coefficients.  Further, the maximal ideal space $X_I$ of the resulting
  $C^\ast$-algebra for
  the quotient module is shown to be contained in $Z(I)\cap \partial\mathbb{B}_n$, where $Z(I)$  is the zero
  variety for $I$, and to contain all points in $\partial\mathbb{B}_n$ that are limit points of $Z(I)\cap \mathbb{B}_n$.
   Finally, the techniques introduced enable one
  to study a certain class
  of weight Bergman spaces on the ball.

\end{abstract}
\maketitle

\section{Introduction}
  In \cite{Ar2,Ar5} Arveson raised the interesting question of whether homogeneous polynomial ideals lead to  $C^*-$algebras of essentially normal operators.
  In particular, one knew that for Hilbert spaces of holomorphic functions on the open unit ball $  \mathbb{B}_n=\{z\in\mathbb{C}^n:|z|<1\}$  such as the
  Hardy and Bergman spaces, the operators defined to be  multiplication  by polynomials were essentially normal.  Arveson focused on a related space, now called the Drury-Arveson
  space, $H^2_n$, and showed the same was true. Moreover, he asked if the submodule $[I]$ defined as the closure of a homogeneous polynomial ideal $I$   has the same property. Actually, the commutators
   and cross-commutators of these multiplication operators on $H^2_n$ are in the Schatten $p-$class $\mathcal{L}^p$ for $p>n$ and Arveson asked if the same was true for the operators on $[I]$. Perhaps the best result responding to this question is due to Guo and the second author \cite{GW}, which established that Arveson's conjecture is valid for principal \emph{homogeneous}
polynomial ideals.
In this paper, we introduce a new approach to this problem based on covering techniques from harmonic analysis.  We use it to extend the earlier result to arbitrary principal polynomial ideals.
\begin{thm*} If $\mathcal{M}=[p]$ is the submodule of the Bergman space $L_a^2(\mathbb{B}_n)$ generated by an analytic polynomial $p$, then $\mathcal{M}$ is $p-$essentially normal for $ p>n$.
\end{thm*}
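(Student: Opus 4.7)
The plan begins with the standard reduction: since $\mathcal{M}$ is invariant under each $M_{z_i}$, one has $P_{\mathcal{M}^\perp}M_{z_i}P_{\mathcal{M}}=0$, so that $[P_{\mathcal{M}},M_{z_i}]=P_{\mathcal{M}}M_{z_i}P_{\mathcal{M}^\perp}$. Combined with the known $p$-essential normality of the Bergman tuple on $L_a^2(\mathbb{B}_n)$, a short manipulation shows that the theorem is implied by the single Schatten-class estimate
\[
\bigl[P_{\mathcal{M}},M_{z_i}\bigr]\in\mathcal{L}^{2p}\qquad\text{for all } p>n,\ i=1,\dots,n,
\]
because the restricted cross-commutators $[P_{\mathcal{M}}M_{z_i}P_{\mathcal{M}},P_{\mathcal{M}}M_{z_j}^*P_{\mathcal{M}}]$ then differ from $P_{\mathcal{M}}[M_{z_i},M_{z_j}^*]P_{\mathcal{M}}$ by a product of two $\mathcal{L}^{2p}$ operators, which lies in $\mathcal{L}^p$.

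I would next express $P_{\mathcal{M}}$ through the polar decomposition of $M_p:L_a^2\to L_a^2$. Writing $T:=M_p^*M_p$, the injective positive Toeplitz operator with symbol $|p|^2$, one has formally $P_{\mathcal{M}}=M_p T^{-1}M_p^*$ (regularizing $T$ by $T+\epsilon$ if $T^{-1}$ is unbounded near the boundary zeros of $p$ and passing to the limit). Then
\[
[P_{\mathcal{M}},M_{z_i}]=M_p T^{-1}[M_p^*,M_{z_i}]+M_p[T^{-1},M_{z_i}]M_p^*.
\]
The first summand is handled by classical commutator estimates for the Bergman projection, which place $[M_p^*,M_{z_i}]=[T_{\bar p},M_{z_i}]$ in $\mathcal{L}^p$ for $p>n$. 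The delicate term is the second, and the harmonic analysis is what is brought to bear on it.

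The core of the proof is the construction of a parametrix for $T^{-1}$ via a covering lemma in the Bergman metric $\beta$. Fix a lattice $\{w_k\}\subset\mathbb{B}_n$ so that the balls $B_k=B_\beta(w_k,r)$ of fixed radius $r$ cover $\mathbb{B}_n$ with bounded overlap, and take a subordinate smooth partition of unity $\{\chi_k\}$. On each ball not too close to $Z(p)$, a \L{}ojasiewicz-type estimate yields a constant $c_k$ with $|p(z)|\asymp c_k$ throughout $B_k$, so that multiplication by $|c_k|^{-2}$ serves as a local substitute for $T^{-1}$. Patching produces an approximation $Q\approx T^{-1}$ whose commutator with $M_{z_i}$ is a sum over $k$ of cutoff commutators localized in $B_k$; each local piece carries a Bergman-kernel weight comparable to $(1-|w_k|^2)^{n+1}$. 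Summing through Schur-type bounds, exploiting the summability of these weights for exponents $s>n$ together with the Schatten inclusion $\ell^{2p}\hookrightarrow\mathcal{L}^{2p}$ applied to the resulting singular-value sequences, should deliver the $\mathcal{L}^{2p}$ bound for every $p>n$.

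The main obstacle is the balls $B_k$ that meet a neighborhood of $Z(p)\cap\partial\mathbb{B}_n$: there $p$ is small while the Bergman metric degenerates, and unlike the homogeneous case treated by Guo and Wang, where the scaling $p(z)=r^d p(z/|z|)$ reduced matters to the compact sphere $\partial\mathbb{B}_n$, no such self-similarity is available here. Making a quantitative \L{}ojasiewicz inequality for $p$ interact cleanly with the boundary behavior of the Bergman kernel, and controlling the error of the local inverses uniformly in $k$, is where the essential technical weight of the argument should lie.
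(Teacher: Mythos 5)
Your reduction at the start is correct and is also what the paper uses: since $[P_{\mathcal{M}},M_{z_j}]=P_{\mathcal{M}}M_{z_j}P_{\mathcal{M}^{\perp}}$ has the same singular values as $P_{\mathcal{M}^{\perp}}M_{z_j}^{*}P_{\mathcal{M}}$, and the cross-commutators $[S_{z_i},S_{z_j}^{*}]$ differ from $P_{\mathcal{M}}[M_{z_i},M_{z_j}^{*}]P_{\mathcal{M}}$ by a product of two such factors, the whole theorem indeed reduces to showing $P_{\mathcal{M}^{\perp}}M_{z_j}^{*}P_{\mathcal{M}}\in\mathcal{L}^{q}$ for $q>2n$ (this is Lemma 2.1 of Guo--Wang, which the paper invokes). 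Your use of a covering lemma also echoes, in spirit, the paper's technique. But from there you diverge, and the divergence is where the gap lies.

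Your plan is to write $P_{\mathcal{M}}=M_p T^{-1}M_p^{*}$ with $T=M_p^{*}M_p$, and build a parametrix for $T^{-1}$ by patching local inverses $|c_k|^{-2}$ on Bergman balls. You then explicitly flag the unresolved issue: the balls $B_k$ that meet a neighborhood of $Z(p)\cap\partial\mathbb{B}_n$, where $|p|$ degenerates simultaneously with $(1-|z|^2)$, and where no rescaling to a compact model exists. But this is not a peripheral technicality to be fixed later; it is precisely the problem. If $Z(p)\cap\partial\mathbb{B}_n=\emptyset$ the submodule has finite codimension and essential normality is trivial, so the entire content of the theorem lives on those bad balls. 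A \L{}ojasiewicz inequality gives a lower bound $|p(z)|\gtrsim\mathrm{dist}(z,Z(p))^{N}$ whose constants and exponent are not uniform relative to the boundary distance, so the local comparison $|p|\asymp c_k$ fails in an uncontrolled way exactly where you need it, and the error of the local inverse is not small there. In addition, since $M_p$ is not bounded below, $T^{-1}$ is genuinely unbounded and the commutator $[T^{-1},M_{z_i}]$ does not have an obvious limiting sense as $\epsilon\to0$ in $T+\epsilon$; this regularization would itself need a uniform-in-$\epsilon$ Schatten estimate, which is again blocked by the boundary zeros.

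The paper avoids inverting $T$ altogether. It proves an exact operator identity (Proposition 2.1),
\[
M^{*}_{z_j}M_p\,f-M_pM^{*}_{z_j}\,f=\sum_{k=0}^{\infty}\frac{1}{(N+1+n)^{k+1}}\bigl[M_{\partial_j R^{k}p}-M^{*}_{z_j}M_{R^{k+1}p}\bigr]f,
\]
which converts the desired Schatten estimate on $P_{\mathcal{M}^{\perp}}M_{z_j}^{*}P_{\mathcal{M}}$ into a convergent series of terms each dominated by the known $\mathcal{L}^{q}$ membership of $(N+1+n)^{-1}$. The analytic heart is then a family of weighted Bernstein-type inequalities (Proposition 2.2), of the form
\[
\|(R^{l}p)f\|_{2k}^{2}\le\frac{c_{2k}\,C(n,m)^{k+1}}{c_{2k-2l}}\,\|pf\|_{2k-2l}^{2},
\]
which bound derivatives of $p$ times $f$ against $pf$ in weighted Bergman norms with a constant depending only on $n$ and $\deg p$. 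These are proved by a covering argument with anisotropic Koranyi-type boxes $Q_{\delta}(a)$ and a mean-value inequality (Lemma 3.2), and they require no lower bound whatsoever on $|p|$: the direction of the estimate is derivative-of-$p$ controlled by $p$, not $1/p$ controlled. That is what sidesteps the degeneration at $Z(p)\cap\partial\mathbb{B}_n$ that blocks your parametrix. To make your route work you would need a quantitatively uniform \L{}ojasiewicz estimate near the boundary interacting with the Bergman kernel, which is not currently available; the paper's identity-plus-Bernstein-inequality scheme is the substitute for that missing ingredient, and your proposal as written does not supply it.
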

As in \cite{GW}, we show that the $p-$essential normality extends to submodules generated by a polynomial with vector-valued coefficients.

 Although   the overall strategy in this paper is similar to that used in \cite{GW}, the techniques
used in this paper are very different and, we believe, provide better insight into why the result is true. In particular, the key step in the proof in \cite{GW} is an inequality which allows one to show that the
 commutators in   question are in the Schatten $ p-$class $\mathcal{L}^p$. We refer the reader to the   discussions in \cite{Esch,Sha}. An
 attempted  proof of this inequality, using standard techniques from PDE, fails since the estimate obtained only shows that those operators are
 bounded. Hence a different approach was used in \cite{GW},  but  one which was far from transparent.

Here we take advantage of the fact that the analysis takes place not just in the context of real analytic functions but for holomorphic ones. Hence, we are able to replace the inequality by one involving both the radial and complex tangential derivatives and then modify and extend known techniques from harmonic analysis  to obtain the desired result. The key step in this proof   rests  on   weighted norm estimations, which follow from a covering argument, now standard in harmonic analysis,  due to Grellier \cite{Gr}. This approach provides a new  proof for the case of principal homogeneous polynomial ideals. However, for  general   polynomials, there is still a  critical  step needed.  To handle this case, one must replace  the quantity estimated in the basic inequality by an infinite series of terms, each one  of which requires   an estimate  involving an analogue  of an inequality that follows from this  covering argument. To show that the series converges, one needs to examine  carefully how  the constants in the estimates  behave  and show that they depend  only on the dimension of the ball and the degree of the polynomial.

As a consequence of the essential normality of the cyclic submodule generated by a polynomial, one obtains an extension of the $C^*-$algebra of   compact operators
 by the   algebra of continuous functions on a closed subset of the unit sphere in $\mathbb{C}^n$ which is related to the zero variety of the polynomial. (Here one is using the quotient module defined by $[p]$.) As a result one obtains an odd K-homology element.
 We discuss these issues as well as  other consequences   of the main result. In particular, the main result is equivalent to the fact  that for the   Bergman space defined relating to the volume  measure weighted by the square of the absolute value of the polynomial, the commutators
 of the multiplication operators by coordinate  functions on this  closure are in $\mathcal{L}^p$ for $p>n$.   The result involves an explicit characterization of the elements in the spaces.

In Section 2 we provide the variant inequality, state the norm estimates  required and outline the argument    of the main result. The norm estimates are   established by an appropriate covering argument  in Section 3. Finally, in Section 4 we discuss briefly the result for the weighted Bergman space and some of the consequences of essential  normality including the odd K-homology element defined.

\section{Main Result}

 In this paper, we are mainly concerned with   the (weighted)
 Bergman spaces $L_a^2(\mathbb{B}_n)$ ($L_{a,t}^2(\mathbb{B}_n)$)  over the unit ball $\mathbb{B}_n$.
The weighted Bergman space $L_{a,t}^2(\mathbb{B}_n)$ ($t\geq 0$) consists of the analytic functions in $L_{t}^2(\mathbb{B}_n)$ with  the norm
$$\|f\|_t^2=\int_{\mathbb{B}_n} |f(z)|^2 c_t (1-|z|^2)^t dv(z),$$
where $c_t=\frac{(n+t)!}{n!t!}, dv(z)=\frac{dm(z)}{Vol(\mathbb{B}_n)}$ and $dm$ is the Lebesgue measure over $\mathbb{B}_n$, $Vol(\cdot)$ is the   measure   of the domain.  (In this paper we need only   the case that  $t$ is a non-negative integer.)
It's well known  that $L_{a,t}^2(\mathbb{B}_n)$ has the canonical orthogonal basis $\{z^\alpha:\alpha=(\alpha_1,\cdots,\alpha_n)\in \mathbb{Z}^n,\alpha_i\geq 0\,\,for\,\,1\leq i\leq n\}$
(see  e.g.  \cite{Zhu}) with $$\|z^\alpha\|_t^2=\frac{\alpha ! (n+t)!}{(n+t+|\alpha|)!}, $$
where $\alpha !=\alpha_1!\cdots\alpha_n!$ and $|\alpha|=\alpha_1+\cdots+\alpha_n$ for a multi-index $\alpha=(\alpha_1,\cdots,\alpha_n).$

  We will focus on the operators on $L_{a,t}^2(\mathbb{B}_n) $ rather than  the function theory. We pursue the same basic strategy  as in \cite{GW}. For $f\in H^\infty (\mathbb{B}_n)$, the set of all bounded analytic functions on $\mathbb{B}_n$, define  the multiplication operator $M^{(t)}_f$ on $L_{a,t}^2(\mathbb{B}_n)$ as
 $$  M^{(t)}_f(g)=fg,g\in L_{a,t}^2,$$
 which is a bounded operator with   norm $\|f\|_{\infty}.$ And  define the  weighted Toeplitz operator $T^{(t)}_f$ on $L_{a,t}^2(\mathbb{B}_n)$ with the  symbol $f\in L^\infty(\mathbb{B}_n)$ as
         $$T^{(t)}_f(g)=P^{(t)}M^{(t)}_f(g)= P^{(t)}(fg), g\in  L_{a,t}^2,$$
         where $P^{(t)}$ is the orthogonal projection from  $L_{t}^2(\mathbb{B}_n)$ to $L_{a,t}^2(\mathbb{B}_n).$  To simplify the notation, we  let $\|f\|,M_f,T_f$  denote the norm of $f$, the multiplication operator and the Toeplitz operator           on $L_a^2(\mathbb{B}_n)$, respectively.

 In this section we will prove that the cyclic submodule $\mathcal{M}=[p]$, which is  generated by an analytic polynomial $p$ in the Bergman space $L_a^2(\mathbb{B}_n)$, is essentially normal ($p-$essentially normal ). That is, the commutators $[S_{z_i},S_{z_j}^\ast]$ are compact (in $\mathcal{L}^p$) for $1\leq i,j\leq n$, where $S_{z_i}$ is the restriction of $M_{z_i}$ to $\mathcal{M}$.

  In what follows denote by $N$  the number operator on $L^2_a(\mathbb{B}_n)$ as in \cite{Ar,GW} so that $N(z^\alpha)=|\alpha|z^\alpha$ for any non-negative multi-index $\alpha$, and let $ \partial_i=\partial_{z_i},\bar{\partial}_i=\partial_{\bar{z_i}}$  be the partial derivatives  with respect to $z_i,\overline{z_i}$, respectively. Furthermore, let $R(f)=\sum_{i=1}^n z_i \partial_i(f)$ be the radial derivative. Obviously,  $Rf=mf$ for any  homogeneous analytic  polynomial $f$ with   $m=\deg(f)$. We refer the reader to \cite{Zhu} for more properties of the radial derivative. Finally, let $L_{j,i} p=  \bar{z_i} \partial_j p- \bar{z_j} \partial_i p$ be the complex tangential derivative, which behaves well relative  to the distance to the boundary as shown, for example, in \cite{Gr}, \cite[Section 7.6]{Zhu} as well as in  other  references.

  Our first result is    a variant of   formula $(2.6)$ in \cite{GW}, which is an identity relating the commutator of multiplication operators and the radial derivative.
\begin{prop}
For   analytic polynomials  $f,p\in\mathbb{C}[z_1,\cdots,z_n]$,   the equation
$$M^*_{z_j} M_p\,f-M_p M^*_{z_j}\,f=\sum_{k=0}^\infty \frac{1}{(N+1+n)^{k+1}}  [(M_{\partial_j R^{k} p}- M^*_{z_j} M_{R^{k+1} p})\,f],\, 1\leq j\leq n$$
 holds  on the Bergman space $L_a^2(\mathbb{B}_n)$.
\end{prop}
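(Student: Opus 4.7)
The plan is to derive the identity by establishing a single-step commutator formula and then iterating it, with the remainder controlled through the homogeneous decomposition of $p$. The basic tool is the Bergman-space identity
$$M_{z_j}^* = (N+n+1)^{-1}\partial_j$$
on the dense polynomial subspace, which follows directly from $M_{z_j}^*\,z^\alpha = \tfrac{\alpha_j}{n+|\alpha|}z^{\alpha-e_j}$ combined with the eigenvalue $\tfrac{1}{n+|\alpha|}$ of $(N+n+1)^{-1}$ on $z^{\alpha-e_j}$. This is the bridge that converts $M_{z_j}^*$ into the holomorphic derivatives $\partial_j$ and $R$ appearing on the right-hand side.

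Using $\partial_j(pf)=(\partial_j p)f+p\,\partial_j f$ together with $[N,M_p]=M_{Rp}$ (and consequently $[(N+n+1)^{-1},M_p]=-(N+n+1)^{-1}M_{Rp}(N+n+1)^{-1}$), a short computation yields the one-step identity
$$[M_{z_j}^*, M_p]\,f \;=\; (N+n+1)^{-1}\bigl(M_{\partial_j p}-M_{Rp}\,M_{z_j}^*\bigr) f.$$
This differs from the $k=0$ term of the target series only in the order of $M_{Rp}$ and $M_{z_j}^*$; the discrepancy, $(N+n+1)^{-1}[M_{z_j}^*, M_{Rp}]$, has exactly the same shape as the original commutator with $p$ replaced by $Rp$, which is what triggers the iteration. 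Setting
$$E_K \;=\; [M_{z_j}^*, M_p]\,f - \sum_{k=0}^{K-1}(N+n+1)^{-(k+1)}\bigl(M_{\partial_j R^k p} - M_{z_j}^* M_{R^{k+1}p}\bigr) f,$$
a straightforward induction on $K$ (applying the one-step identity with $p$ replaced by $R^K p$ and regrouping) yields $E_K=(N+n+1)^{-K}[M_{z_j}^*, M_{R^Kp}]\,f$, so the identity will follow once $E_K\to 0$.

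The main obstacle is this convergence statement. Writing $p=\sum_{m=0}^d p_m$ in homogeneous pieces and using that $R$ annihilates constants gives $R^K p=\sum_{m=1}^d m^K p_m$, hence
$$[M_{z_j}^*, M_{R^Kp}] \;=\; \sum_{m=1}^d m^K\,[M_{z_j}^*, M_{p_m}].$$
Applied to a homogeneous component $f_\ell$ of $f$ of degree $\ell$, each $[M_{z_j}^*, M_{p_m}]\,f_\ell$ is homogeneous of degree $\ell+m-1$, on which $(N+n+1)^{-K}$ acts as the scalar $(\ell+m+n)^{-K}$. Since $m/(\ell+m+n)<1$ uniformly for $\ell\ge 0$, $1\le m\le d$, and $n\ge 1$, the bound
$$\|E_K f_\ell\|\;\le\;2\|f_\ell\|\sum_{m=1}^d \|p_m\|_\infty\left(\frac{m}{\ell+m+n}\right)^K$$
decays geometrically in $K$. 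The identity therefore holds on each $f_\ell$, and orthogonality of the homogeneous decomposition together with boundedness of $[M_{z_j}^*, M_p]$ extends it to arbitrary $f\in L_a^2(\mathbb{B}_n)$.
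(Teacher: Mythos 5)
Your proof is correct but takes a genuinely different route from the paper. The paper establishes the identity by direct verification on monomials: with $p=z^\alpha$, $f=z^\beta$, both sides are computed explicitly using $M^*_{z_j}z^\alpha=\tfrac{\alpha_j}{n+|\alpha|}z^{\alpha-\varepsilon_j}$ and shown to agree; the infinite sum telescopes in a simple geometric series on each monomial. Your argument instead explains the \emph{origin} of the formula: you introduce the operator identity $M_{z_j}^*=(N+n+1)^{-1}\partial_j$ on polynomials, derive from it and $[N,M_p]=M_{Rp}$ the one-step commutator identity $[M_{z_j}^*,M_p]=(N+n+1)^{-1}(M_{\partial_j p}-M_{Rp}M_{z_j}^*)$, then iterate by noting that putting $M_{z_j}^*$ back on the left of $M_{R^{k+1}p}$ produces a remainder that has the same shape with $p$ replaced by $R^{k+1}p$; the induction $E_K=(N+n+1)^{-K}[M_{z_j}^*,M_{R^Kp}]f$ is verified correctly, and the vanishing of $E_K$ via the homogeneous expansion $R^Kp=\sum_m m^K p_m$ and the factor $(m/(\ell+m+n))^K\le(d/(d+n))^K<1$ is sound for polynomial $f$ (the final extension to all of $L_a^2$ is not needed for the proposition but is harmless). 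The paper's computation is shorter once the formula is guessed; your derivation is more illuminating, since it shows how the terms $\partial_j R^kp$ and $R^{k+1}p$ and the denominators $(N+n+1)^{k+1}$ arise from iterating a single commutator step, and in particular makes transparent why each successive iterate produces a correction of the same structure.
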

\begin{proof}
By   linearity, it is enough to verify  the case in which $p=z^\alpha$ and $\,f=z^\beta$. Using the fact that $M^*_{z_j}(z^\alpha)=\frac{\alpha_j}{n+|\alpha|}z^{\alpha-\varepsilon_j},$
  where $1\leq j\leq n$ and $\varepsilon_j$ is the multi-index with a  $1$ in the $j$ position and  $0$ in all other positions, then we have
 \begin{eqnarray*}
LHS &=&M^*_{z_j}z^{\alpha+\beta}-z^\alpha M^*_{z_j}z^{ \beta}
 = [\,\frac{\alpha_j+\beta_j}{n+|\alpha|+|\beta|}-\frac{ \beta_j}{n+ |\beta|}\,]z^{\alpha+\beta-\varepsilon_j} \\
&=& \frac{\alpha_j(n+|\beta|)-\beta_j|\alpha|}{(n+|\alpha|+|\beta| )(n+ |\beta|)}z^{\alpha+\beta-\varepsilon_j}.
 \end{eqnarray*}
Furthermore, we have
  \begin{eqnarray*}
RHS  &=&  \sum_{k=0}^\infty \frac{1}{(N+1+n)^{k+1}} \big{[}|\alpha|^k M_{\partial_j (z^\alpha)} z^\beta-M^*_{z_j}(|\alpha|^{k+1}z^{\alpha+\beta})\big{]}\\
     &=& \sum_{k=0}^\infty \frac{|\alpha|^k}{(|\alpha|+|\beta|+n)^{k+1} } [\alpha_j - \frac{|\alpha|(\alpha_j+\beta_j)}{n+|\alpha|+|\beta|} ]z^{\alpha+\beta-\varepsilon_j}\\
     &=& \frac{1}{n+|\beta|} [\alpha_j - \frac{|\alpha|(\alpha_j+\beta_j)}{n+|\alpha|+|\beta|} ]z^{\alpha+\beta-\varepsilon_j}=LHS,
 \end{eqnarray*}
which completes the proof.
\end{proof}
To use the  strategy of \cite{GW}, we need to show the convergence  of the infinite sum in  the $RHS$  above in an appropriate sense. The following proposition will play an important role in that.
\begin{prop}\label{eq:original}
For  positive integers $n$ and $m$, there is a positive constant $\small{C(n,m)>1}$ such that  for every analytic polynomial $p\in\mathbb{C}[z_1,\cdots,z_n]$ with   degree $m,$  the following inequalities hold:
\begin{eqnarray*}
&(1)&\|(R^l p)\,f\|_{2k}^2
\leq \frac{c_{2k} {C(n,m)}^{k+1}}{c_{2k-2l}} \|pf\|_{2k-2l}^2, \text{ for  every integer  } l  \text{ with }  0\leq l\leq k,        \\
&(2)& \|(L_{j,i}p) \,f \|^2_{2k+1}
\leq \frac{c_{2k+1} {C(n,m)}^{k+1}}{c_{2k}} \|pf\|_{2k}^2,  \text{ for integers } i,j  \text { with } 1\leq i\neq j\leq n,     \\
&(3)& \|  (\partial_j p)   \,f \|_{2k+2}^2
\leq \frac{c_{2k+2} {C(n,m)}^{k+1}}{c_{2k}} \|pf\|_{2k}^2,  \text{ for every integer  }   j  \text { with } 1\leq  j\leq n,
\end{eqnarray*}
   for any analytic polynomial $f\in\mathbb{C}[z_1,\cdots,z_n]$ and non-negative integer $k$, where $c_t=\frac{(n+t)!}{n!t!}$ for $t\in\mathbb{N}$.
\end{prop}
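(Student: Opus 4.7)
The plan is to cast each of (1)--(3) as a weighted $L^2$ integral estimate on $\mathbb{B}_n$ and to establish them by a local holomorphic analysis on Koranyi balls followed by summation over a bounded-overlap covering of Grellier type. After substituting the definitions of the norms the combinatorial constants $c_t$ cancel, and (1) is equivalent to
\[
\int_{\mathbb{B}_n} |R^l p(z) f(z)|^2 (1-|z|^2)^{2k}\, dv(z) \leq C(n,m)^{k+1} \int_{\mathbb{B}_n} |p(z) f(z)|^2 (1-|z|^2)^{2k-2l}\, dv(z),
\]
with parallel reformulations of (2) and (3).

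The first step is to cover $\mathbb{B}_n$ by Koranyi balls $B_j = B(w_j, \delta_j)$ of non-isotropic radius $\delta_j \sim (1-|w_j|^2)^{1/2}$ with bounded overlap, on each of which $(1-|z|^2)\sim \delta_j^2$ is essentially constant. Since $(R^l p) f$ is holomorphic, the sub-mean value property on a controlled dilate $B_j^*$ gives local integral control. Because $R^l p$ is a polynomial of degree $\leq m$, a Cauchy-type derivative bound then replaces $|R^l p|^2$ by $(1-|z|^2)^{-2l}|p|^2$ up to a constant depending only on $n$ and $m$, provided $|p|$ is comparable to its supremum on $B_j^*$. This turns the weight $(1-|z|^2)^{2k}$ on the left into the required $(1-|z|^2)^{2k-2l}$ on the right; summing over the cover with bounded overlap yields (1). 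Inequality (2) follows from the same scheme with the sharper replacement $|L_{j,i}p|^2 \leq C(n,m)(1-|z|^2)^{-1}|p|^2$, the gain of one power of $(1-|z|^2)$ reflecting the classical advantage of complex tangential derivatives on $\mathbb{B}_n$. Inequality (3) is then obtained by combining (1) with $l=1$ and (2) via the decomposition of $\partial_j$ into radial and tangential components.

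The main obstacle is that $p$ may vanish inside a Koranyi ball, in which case the Cauchy-type replacement of $R^l p$ by $p$ is not immediate. Grellier's covering lemma handles this by iteratively subdividing ``bad'' balls meeting $\{p=0\}$ into smaller Koranyi balls until a Lojasiewicz-type inequality for polynomials of degree $m$ restores a quantitative lower bound on $|p|$. The crucial feature is that the resulting geometric and comparison constants depend only on $n$ and $m=\deg p$, not on the coefficients or the zero locus of $p$; this uniformity is what will later allow the infinite series in Proposition~2.1 to be summed with bounds depending only on $n$ and $m$. The factor $C(n,m)^{k+1}$ in place of a single $C(n,m)$ arises from tracking this refinement across the scales of the weight $(1-|z|^2)^{2k}$, each dyadic level contributing a bounded multiplicative constant. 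Verifying the uniformity and summing the refinement carefully is the heart of the argument and the main point where the approach goes beyond the homogeneous case treated in \cite{GW}.
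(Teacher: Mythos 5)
Your outline correctly identifies the overall architecture of the argument---reformulate the norm inequalities as weighted $L^2$ integral estimates, prove a local estimate on Koranyi-type balls, and sum over a Grellier-type covering with bounded overlap---but the crucial local step is wrong, and the proposed repair would not deliver the required uniformity.

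The gap is in the sentence ``a Cauchy-type derivative bound then replaces $|R^l p|^2$ by $(1-|z|^2)^{-2l}|p|^2$ \ldots\ provided $|p|$ is comparable to its supremum on $B_j^*$.'' That proviso is exactly what fails when $p$ vanishes inside (or near) the ball, and you propose to fix it by ``iteratively subdividing bad balls meeting $\{p=0\}$ \ldots\ until a Lojasiewicz-type inequality \ldots\ restores a quantitative lower bound on $|p|$.'' This is not what Grellier's covering lemma does---the covering in the paper (Proposition 3.5) is purely geometric, a Vitali-type selection of Koranyi balls that is completely independent of $p$ and its zero set; there is no refinement adapted to $\{p=0\}$. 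Moreover, a Lojasiewicz inequality for a polynomial of degree $m$ does \emph{not} come with constants depending only on $n$ and $m$: the constant and exponent depend on the coefficients (equivalently on the geometry of the zero variety), so the iteration you describe cannot yield the uniform $C(n,m)$ that the proposition asserts and that the subsequent summation of the series in Proposition 2.1 requires.

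The paper bypasses the issue entirely with the one-variable Lemma 3.2: writing a one-variable polynomial as $p(z)=z^u\prod(z-a_i)\prod(z-b_j)$ with $|a_i|\ge 1$ and $|b_j|<1$, one replaces each factor $(z-b_j)$ by the unimodular-on-$\mathbb{T}$ factor $(1-\bar b_j z)$ to produce a function with a controllable nonzero value at the origin, yielding a bound of the form $|\partial^l p(0)\,f(0)|\le \tfrac{m!}{(m-l)!}\int_{\mathbb{T}}|pf|$ and its disk analogue. This local integral bound holds \emph{regardless} of the location of the zeros of $p$, so no subdivision or lower bound on $|p|$ is ever needed. For $(1)$ the paper further reduces to one variable by a slice integration (Propositions 1.4.3 and 1.4.7 of Rudin), and for $(3)$ it uses, as you correctly note, the radial/tangential decomposition of $\partial_j$ together with $(1)$ and $(2)$. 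Finally, your explanation of the $C(n,m)^{k+1}$ factor as coming from ``dyadic levels'' of an iterative refinement is also off: in the paper it arises from the ratio $(1+2c)^{2k}/(1-3c)^{2k}$ in comparing the weight $(1-|z|^2)^{2k}$ to its value at the center of a single Koranyi ball, with no iteration involved.
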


 The proof of this proposition rests heavily  on techniques from harmonic   analysis. We postpone  the proof to the next section. We show first how to obtain the essential normality of $\mathcal{M}=[p]$ from it.

\begin{lem}
Fix  $l\in\mathbb{N}$. For  any analytic polynomial $f$ satisfying  $\partial_\alpha f(0)=0$ for $|\alpha|<l$ and any non-negative integer $k$, we have
\begin{eqnarray*}
 &(1)& \|\frac{1}{(N+1+n)^{k+1/2}} f\| ^2\leq \frac{(n+2k+1+l)^{l }}{(l+1+n)^{2k+1 }}  \|f\|_{ 2k+1}^2,\,\,\,\text{and}\\
 &(2)& \|\frac{1}{(N+1+n)^{k+1/2}} [T_{z_j}^*  - T^{(2k+1) *}_{z_j}] (f)\| ^2\leq \frac{(n+2k+2+l)^{l}}{(l +n)^{2k+1 }} \|f\|_{ 2k+2}^2, 1\leq j\leq n.
  \end{eqnarray*}
\end{lem}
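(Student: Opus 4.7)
The plan is to reduce both (1) and (2) to mode-by-mode scalar inequalities by orthogonality, and then verify those scalar inequalities by algebraic manipulation of rising factorials, using the symmetry identity $\prod_{i=1}^N\frac{n+l+i}{n+i}=\prod_{i=1}^l\frac{n+N+i}{n+i}$ together with the monotonicity $\frac{n+m+i}{n+l+i}\le\frac{n+m+1}{n+l+1}$ for $m\ge l$, $i\ge 1$.

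Expand $f=\sum_{|\alpha|\ge l}a_\alpha z^\alpha$. Then $(N+1+n)^{-(k+1/2)}$ is diagonal in the monomial basis, and a direct computation from $T_{z_j}^{(t)\ast}z^\beta=\frac{\beta_j}{n+t+|\beta|}z^{\beta-\varepsilon_j}$ gives
$$(T_{z_j}^{\ast}-T_{z_j}^{(2k+1)\ast})z^\beta=\frac{(2k+1)\beta_j}{(n+|\beta|)(n+2k+1+|\beta|)}z^{\beta-\varepsilon_j}.$$
Distinct $\beta$ with $\beta_j\ge 1$ produce distinct $z^{\beta-\varepsilon_j}$, so the monomials on the right remain orthogonal in $L^2_a$. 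Hence both sides of each inequality decompose as non-negative sums indexed by $\alpha$ (resp.\ $\beta$), and it is enough to prove each estimate term-by-term.

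For (1), substituting $\|z^\alpha\|^2=\alpha!\,n!/(n+|\alpha|)!$ and $\|z^\alpha\|_{2k+1}^2=\alpha!(n+2k+1)!/(n+2k+1+|\alpha|)!$ and cancelling $|a_\alpha|^2\alpha!$, the scalar inequality becomes
$$r(m):=\prod_{i=1}^{2k+1}\frac{n+m+i}{(n+m+1)(n+i)}\le\frac{(n+2k+1+l)^l}{(n+l+1)^{2k+1}},\qquad m=|\alpha|\ge l.$$
Each factor $(n+m+i)/(n+m+1)=1+(i-1)/(n+m+1)$ is nonincreasing in $m$, so $r(m)\le r(l)$. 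Applying the symmetry identity with $N=2k+1$ converts the product $\prod_{i=1}^{2k+1}(n+l+i)/(n+i)$ appearing in $r(l)\,(n+l+1)^{2k+1}$ into $\prod_{i=1}^l(n+2k+1+i)/(n+i)\le(n+2k+1+l)^l$, which yields (1).

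For (2), the analogous term-by-term reduction yields
$$\frac{(2k+1)^2\beta_j}{(n+m)^{2k+2}(n+2k+1+m)^2}\prod_{i=1}^{2k+2}\frac{n+m+i}{n+i}\le\frac{(n+2k+2+l)^l}{(n+l)^{2k+1}}$$
for $m=|\beta|\ge l$ and $1\le\beta_j\le m$. I plan to apply the same splitting $\prod_{i=1}^{2k+2}\frac{n+m+i}{n+i}=\prod_{i=1}^{2k+2}\frac{n+m+i}{n+l+i}\cdot\prod_{i=1}^{2k+2}\frac{n+l+i}{n+i}$, bound each factor by the monotonicity and the symmetry identity respectively, and absorb the prefactor $(2k+1)^2\beta_j$ using $\beta_j\le m\le n+m$ and $(2k+1)^2\le(n+2k+1+m)^2$. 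The main obstacle I anticipate is that the direct split leaves an excess factor of the form $(n+m+1)^{2k+2}/((n+m)^{2k+1}(n+l+1))$, which is not obviously $\le n+l$. To close the estimate, I would first peel off the pair of factors $(n+m+2k+1)(n+m+2k+2)$ from the numerator so that they cancel against $(n+2k+1+m)^2$ in the denominator (up to the harmless ratio $(n+m+2k+2)/(n+m+2k+1)\le 2$), leaving a product of only $2k$ factors to which the splitting then applies cleanly. An alternative route is to apply part (1) to $g:=(T_{z_j}^{\ast}-T_{z_j}^{(2k+1)\ast})f$, which vanishes to order $\ge l-1$, combined with a separate mode-wise bound $\|g\|_{2k+1}^2\le C(n,k)\|f\|_{2k+2}^2$ obtained by the same factorial bookkeeping.
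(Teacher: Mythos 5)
Your overall strategy is the same as the paper's: reduce by orthogonality to a scalar inequality indexed by the degree $m$, show the relevant ratio is monotone in $m$ so it suffices to take $m=l$, and then bound the remaining $l$-fold product by $(n+2k+1+l)^l$ or $(n+2k+2+l)^l$. Part (1) you carry through correctly; your symmetry identity is exactly what the paper uses in the disguised form $\frac{n!\,(n+2k+1+l)!}{(n+l)!\,(n+2k+1)!}\le(n+2k+1+l)^l$.

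For part (2) you stop at a plan, and the plan as written does not quite close. The scalar reduction and your diagnosis of why the naive split fails are both correct, and peeling off $(n+m+2k+1)(n+m+2k+2)$ against $(n+2k+1+m)^2$ is the right idea. But bounding the leftover $\frac{n+m+2k+2}{n+m+2k+1}$ crudely by $2$ is not ``harmless'': after peeling, splitting the remaining $2k$ factors, and using $\beta_j(n+l)\le(n+m)^2$ and the telescoping $\bigl(\frac{(n+m+1)(n+l)}{(n+m)(n+l+1)}\bigr)^{2k}\le1$, the estimate reduces to $\frac{2(2k+1)^2}{(n+2k+1)(n+2k+2)}\le1$, which already fails for $n=1$, $k=2$. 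You need the sharper $\frac{n+m+2k+2}{n+m+2k+1}\le\frac{n+2k+2}{n+2k+1}$ (valid since $m\ge0$), after which the same bookkeeping gives $\frac{(2k+1)^2}{(n+2k+1)^2}\le1$ and the inequality closes. The paper sidesteps this juggling entirely by proving monotonicity in $d$ of the single quantity $\frac{(n+2k+2+d)!}{(n+2k+1+d)\,(d+n)^{2k+1}\,(n+d)!}$, which already has the peeled-off factor absorbed; after the monotone drop to $d=l$ the prefactor $\frac{(2k+1)^2}{(n+2k+1+l)^2}\le1$ comes for free, with no stray factor of $2$ to defeat. Your alternative route (apply (1) to $g$, which vanishes to order $l-1$, plus a mode-wise bound $\|g\|_{2k+1}^2\le C(n,k)\|f\|_{2k+2}^2$) can also be made to work, but it is not automatic from what you wrote: $C(n,k)$ grows roughly linearly in $k$, and one must check explicitly that $C(n,k)(n+2k+l)^{l-1}\le(n+2k+2+l)^l$. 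In short: right approach, (1) complete, but (2) needs the arithmetic actually carried out, and the specific bound you flagged as ``harmless'' is where it would break.
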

\begin{proof}
By  the orthogonality of homogeneous polynomials of different degrees, it's enough to show the inequality in the case that $f$ is an analytic homogeneous polynomial with $d=\deg(f)\geq l.$

(1) By the fact that  $\|z^\alpha\|_{t}^2=\frac{\alpha!(n+t)!}{(n+|\alpha|+t)!}$, we have for a  homogeneous  analytic polynomial $f=\sum_{|\alpha|=d} a_\alpha z^\alpha$ and a non-negative integer $t$, that
\begin{eqnarray*}
\|f\|^2=\sum_{|\alpha|=d} |a_\alpha|^2 \frac{\alpha!n!}{(n+d )!}&=&\frac{ n!}{(n+d)!}\,\frac{ (n+t+d)!}{(n+t)!}\sum_{|\alpha|=d} |a_\alpha|^2 \frac{\alpha!(n+t)!}{(n+d +t)!}\\&=&\frac{ n!}{(n+d)!}\,\frac{ (n+t+d)!}{(n+t)!}\|f\|_{ t}^2.
\end{eqnarray*}
Therefore, for a non-negative integer $k$ we have
\begin{eqnarray*}
LHS^{(1)}&=&\|\frac{1}{(d+1+n)^{k+1/2}} f\|^2=\frac{1}{(d+1+n)^{2k+1 }}\|f\|^2 \\
   &=& \frac{1}{(d+1+n)^{2k+1 }} \,\frac{ n!}{(n+d)!}\,\frac{ (n+2k+1+d)!}{(n+2k+1)!}\,\|f\|_{ 2k+1}^2.\\
   \end{eqnarray*}
   Here, $LHS^{(1)}$ refers to the left-hand side of the inequality in statement $(1)$.

   Since $d\geq l$ and
   $$\frac{(n+2k+1+d)!}{(d+1+n)^{2k+1 }(n+d)! }=\frac{(n+2k+1+d)\cdots(n+d+1)}{(d+1+n)^{2k+1 }}=(1+\frac{2k}{d+1+n})\cdots (1),$$
  we see that this product  is monotonically    decreasing with respect to $d$. Thus we have
    $$ \frac{(n+2k+1+d)!}{(d+1+n)^{2k+1 }(n+d)! } \leq \frac{(n+2k+1+l)!}{(l+1+n)^{2k+1 }(n+l)! }.$$
    This means that
    \begin{eqnarray*}LHS^{(1)}&\leq& \frac{n! (n+2k+1+l)!}{(l+1+n)^{2k+1 }(n+l)!(n+2k+1)!}   \|f\|_{ 2k+1}^2 \\ &\leq& \frac{(n+2k+1+l)^{l }}{(l+1+n)^{2k+1 }}  \|f\|_{ 2k+1}^2=RHS^{(1)},\end{eqnarray*}
    which completes the proof of $(1)$.

(2) We begin  the proof of $(2)$ with an observation.  Although  the range of $T^{(2k+1) *}_{z_j}$ is contained in $L_{a,2k+1}^2(\mathbb{B}_n)$, it's easy to see that  the image of an analytic  polynomial   under $T^{(2k+1) *}_{z_j}$ is still  an analytic  polynomial. This follows from   the fact that
\begin{equation}\label{eq:temp}
T^{(2k+1) *}_{z_j}(z^\alpha)=\frac{\alpha_j}{n+2k+1+|\alpha|}z^{\alpha-\varepsilon_j} ,\,1\leq j\leq n.
\end{equation}
Therefore it belongs to  $L_{a }^2(\mathbb{B}_n) $ and the LHS$^{(2)}$   makes sense if $f$ is an analytic polynomial. Specializing (\ref{eq:temp}) to $k=0$, one sees that
 \begin{eqnarray}T_{z_j}^* (z^\alpha)=\frac{\alpha_j}{n+|\alpha|}z^{\alpha-\varepsilon_j} ,\,1\leq j\leq n.\end{eqnarray}
Combining   formulas   $(\ref{eq:temp}),(2.2)$,  we have
 $$T_{z_j}^* (z^\alpha)-T^{(2k+1) *}_{z_j}(z^\alpha)=\frac{\alpha_j(2k+1)}{(n+|\alpha|)(n+2k+1+|\alpha|)}z^{\alpha-\varepsilon_j}=\frac{2k+1}{n+2k+1+|\alpha|}T_{z_j}^* (z^\alpha) .$$
Thus, for any  homogeneous analytic  polynomial $f$ with $d=\deg( f),$ one has that
 $$T_{z_i}^* (f)-T^{(2k+1) *}_{z_i}(f)=\frac{2k+1}{n+2k+1+d}T_{z_i}^* (f) . $$
This implies that
 \begin{eqnarray*}
LHS^{(2)}&=&\|\frac{1}{(d +n)^{k+1/2}} \frac{2k+1}{n+2k+1+d}T_{z_j}^* (f)\|^2\\
    &\leq& \frac{1}{(d +n)^{2k+1 }}\frac{(2k+1)^2}{(n+2k+1+d)^2}\|f\|^2 \\
   &=& \frac{1}{(d +n)^{2k+1 }}\frac{(2k+1)^2}{(n+2k+1+d)^2} \,\frac{ n!}{(n+d)!}\,\frac{ (n+2k+2+d)!}{(n+2k+2)!}\,\|f\|_{ 2k+2}^2.
  \end{eqnarray*}
  Using  the same monotonicity  argument as in $(1)$, one shows that
  $$\frac{(n+2k+2+d)!}{(n+2k+1+d)(d +n)^{2k+1 }(n+d)!} \leq  \frac{(n+2k+2+l)!}{(n+2k+1+l)(l +n)^{2k+1 }(n+l)!}.$$
Hence,
\begin{eqnarray*}
LHS^{(2)} &\leq &\frac{(2k+1)^2 n! (n+2k+2+l)!}{(n+2k+1+l)^2 (l +n)^{2k+1 }(n+l)!(n+2k+2)!}\|f\|_{ 2k+2}^2
\\ &\leq& \frac{(n+2k+2+l)^{l}}{(l +n)^{2k+1 }} \|f\|_{ 2k+2}^2=RHS^{(2)},
\end{eqnarray*}
which  completes the proof of the lemma.
\end{proof}

Using Proposition 2.2 and Lemma 2.3,  we establish  in the following proposition the necessary norm estimates for  each term appearing in the infinite sum of Proposition 2.1.

\begin{prop}
For non-negative integers $k,l$ and   analytic polynomials $p,f\in\mathbb{C}[z_1,\cdots,z_n]$ satisfying  $\partial_\alpha f(0)=0$ for $|\alpha|<l$ and $m=\deg(p)$, we have the   inequality
$${\textstyle\|\frac{1}{(N+1+n)^{k+1/2}}  [M_{\partial_j R^{k} p}- M^*_{z_j} M_{R^{k+1} p}](f)\|  \leq \frac{(n+1)(n+2k+2+l)^{ (l+n) /2} C(n,m)^{k+1 }  }{ (l+n)^{k+1/2 } } \|pf\|},    $$
  where $C(n,m)$ is the constant appearing in Proposition 2.2 which depends only on $n,m$.
\end{prop}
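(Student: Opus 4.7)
The plan is to split the operator as $g := (M_{\partial_j R^k p} - M^*_{z_j} M_{R^{k+1}p})(f) = g_1 + g_2$, where
\[
g_1 = (M_{\partial_j R^k p} - T^{(2k+1)*}_{z_j} M_{R^{k+1}p})(f), \qquad g_2 = -(M^*_{z_j} - T^{(2k+1)*}_{z_j}) M_{R^{k+1}p}(f),
\]
and to bound each after applying $(N+n+1)^{-k-1/2}$. The piece $g_2$ is exactly of the form treated by Lemma 2.3(2): since $R^{k+1} p$ vanishes at the origin (as $R$ kills constants), the input $R^{k+1}p\cdot f$ vanishes to order at least $l$ there, so Lemma 2.3(2) followed by Proposition 2.2(1) (with its $l$ and $k$ both equal to $k+1$) yields $\|(N+n+1)^{-k-1/2} g_2\|^2 \le \frac{(n+2k+2+l)^l}{(l+n)^{2k+1}}\,c_{2k+2}\,C(n,m)^{k+2}\|pf\|^2$, which, using $c_{2k+2}\le (n+2k+2+l)^n/n!$ and $C(n,m)\ge 1$, is absorbed into the target bound.

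For $g_1$, I use the identities $T^{(2k+1)*}_{z_j} = (N+n+2k+2)^{-1}\partial_j$ and $\partial_j R^{k+1} p = (N+1)\partial_j R^k p$ (the latter from $[\partial_j,R] = \partial_j$ and $R=N$ on polynomials), together with the Leibniz rule for $R$, to derive the explicit formula
\[
g_1 = \frac{1}{N+n+2k+2}\Bigl\{(n+2k+1)(\partial_j R^k p)f + \sum_{i=1}^n z_i\bigl[(\partial_j R^k p)(\partial_i f) - (\partial_i R^k p)(\partial_j f)\bigr]\Bigr\}.
\]
This presents $g_1$ as a smoothed sum of $n+1$ analytic polynomials---one radial summand and $n$ antisymmetric Wronskian summands---and the triangle inequality over these $n+1$ terms is the source of the factor $(n+1)$ in the statement.

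The radial summand is handled by chaining Proposition 2.2(3) (applied to $R^k p$, of degree $\le m$) with Proposition 2.2(1) for $l=k$, yielding $\|(\partial_j R^k p)f\|_{2k+2}^2 \le c_{2k+2}\,C(n,m)^{2k+2}\|pf\|^2$; the compound smoother $(N+n+1)^{-k-1/2}(N+n+2k+2)^{-1}$ is then controlled by a monotonicity-in-degree argument just like that in the proof of Lemma 2.3, producing the denominator $(l+n)^{k+1/2}$ and the numerator $(n+2k+2+l)^{(l+n)/2}$ (the extra $n/2$ in the exponent absorbs $c_{2k+2}^{1/2}$). The main obstacle is the estimate of the $n$ Wronskian summands $S_i := (\partial_j R^k p)(\partial_i f) - (\partial_i R^k p)(\partial_j f)$, which are holomorphic but involve derivatives of $f$ not directly controlled by $\|pf\|$. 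The bridge is the integration-by-parts identity
\[
\int S_i\,\bar h\,(1-|z|^2)^t\,dv \;=\; t\int (L_{j,i} R^k p)\,f\,\bar h\,(1-|z|^2)^{t-1}\,dv
\]
valid for every analytic polynomial $h$ and every $t\ge 1$, obtained by transferring $\partial_i$ and $\partial_j$ off $f$ (boundary terms vanish because $(1-|z|^2)^t$ vanishes on $\partial\mathbb{B}_n$, and mixed partials of $R^k p$ commute). This converts the pairing of the holomorphic Wronskian $S_i$ with analytic test functions into one involving the complex tangential derivative $L_{j,i}(R^k p)$, which is bounded by Proposition 2.2(2) followed by Proposition 2.2(1) with $l=k$, again producing the $C(n,m)^{2k+2}$ factor; the weight-shift $t\mapsto t-1$ combined with the same monotonicity-in-degree argument yields the same final numerator and denominator as for the radial summand.
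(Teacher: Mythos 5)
Your treatment of $g_2$ is exactly the paper's (Lemma~2.3(2) followed by Proposition~2.2(1) with $l=k+1$), and your decomposition of the operator is the same, but for $g_1$ you take a genuinely different route. The paper recognizes $g_1$ as a weighted Toeplitz operator applied to $f$, namely $g_1 = T^{(2k+1)}_{\partial_j R^k p - \overline{z_j}R^{k+1}p}(f) = P^{(2k+1)}\bigl[(\partial_j R^k p - \overline{z_j}R^{k+1}p)f\bigr]$, then uses $\|P^{(2k+1)}(\cdot)\|_{2k+1}\le\|\cdot\|_{2k+1}$ and applies the pointwise identity (2.3), $\partial_j g-\overline{z_j}Rg=(1-|z|^2)\partial_j g+\sum_{i\neq j}z_iL_{j,i}g$, to the \emph{symbol} before Proposition~2.2 is invoked. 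You instead derive an explicit holomorphic formula
\[
g_1=\frac{1}{N+n+2k+2}\Bigl\{(n+2k+1)(\partial_j R^k p)f+\sum_{i\neq j}z_i\bigl[(\partial_j R^k p)(\partial_i f)-(\partial_i R^k p)(\partial_j f)\bigr]\Bigr\},
\]
and dispose of each Wronskian piece by an integration by parts against $(1-|z|^2)^t$ that trades it for $(L_{j,i}R^k p)\,f$ at weight $t-1$. Both of your key identities are correct (I checked the algebraic formula on homogeneous components and the boundary-free integration by parts). What the paper's route buys is economy: the $P^{(2k+1)}$-contraction step replaces both your duality argument and your explicit formula, and the identity (2.3) is applied pointwise rather than via an adjoint. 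What your route buys is that it never leaves $L^2_a$ and makes the appearance of the tangential derivative $L_{j,i}$ look inevitable rather than fortuitous. The price is heavier bookkeeping: your duality introduces a factor $(n+t)$ and a degree-dependent ratio $\sqrt{(n+t+d)/(n+t)}$ in converting from the $(t-1)$-pairing back to the unweighted norm, and one has to check that the extra $(N+n+2k+2)^{-1}$ smoothing in your formula absorbs both. That arithmetic does close (the resulting degree-$d$ prefactor is in fact dominated by the one in Lemma~2.3(1)), but you have left it as an assertion, and a careful reader would want to see it written out. Finally, a small accounting slip: $S_j=0$, so your formula for $g_1$ has $n$ nonzero summands, not $n+1$; the $(n+1)$ in the statement is one radial term plus $n-1$ Wronskians plus the single term from $g_2$.
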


\begin{proof} The key idea of the proof is the following well-known identity  (see e.g. \cite{CFO} )
\begin{eqnarray}\label{eq:derivative}\partial_j g- \overline{z_j} R g&=&(1-\sum_{i=1}^n |z_i|^2) \partial_j g+ \sum_{i=1}^n z_i [\overline{z_i} \partial_j ( g)-\overline{z_j} \partial_i ( g) ]\\
\nonumber &=&(1-|z|^2) \partial_j g+ \sum_{i=1,i\neq j}^n z_i L_{j,i} ( g) \end{eqnarray}
for any smooth function $g$ on $\mathbb{B}_n$.

Using the above identity with  $g=R^k p, $ we have

 \begin{eqnarray*}
 &&\|\frac{1}{(N+1+n)^{k+1/2}}  [M_{\partial_j R^{k} p}- M^*_{z_j} M_{R^{k+1} p}](f)\|  \\
 &\leq & {\textstyle\|\frac{1}{(N+1+n)^{k+1/2}}  [M_{\partial_j R^{k} p}- T^{(2k+1) *}_{z_j}  M_{R^{k+1} p}](f)\|+
  \|\frac{1}{(N+1+n)^{k+1/2}}   ( T_{z_j}^*  -T^{(2k+1) *}_{z_j} ) M_{R^{k+1} p} (f)\|} \\
 &\leq & {\textstyle\frac{(n+l+2k+1)^{ l /2}}{ (l+1+n)^{ k+1/2 } }  \|  T^{(2k+1)  }_{\partial_j R^{k} p-\overline{z_j} R^{k+1}  p }   f\|_{2k+1}
  +\frac{ (n+2k+2+l)^{l/2} }{ (l+n)^{k+1/2 } } \|  M_{R^{k+1}  p } (f)\|_{2k+2}} \\
 &\leq&{\textstyle \frac{ (n+2k+2+l)^{ l /2} }{ (l+n)^{k+1/2 } }  }
 [\|(1-|z|^2) \partial_j R^k (p)\, f\|_{2k+1}+ \sum_{i=1,i\neq j}^n  \| L_{j,i}R^k (p)   f  \|_{2k+1}+\|   {R^{k+1} (p)}  f \|_{2k+2}]
               \\ &\leq&{\textstyle  \frac{ (n+2k+2+l)^{ l /2} }{ (l+n)^{k+1/2 } }  [n{ \sqrt{\frac{c_{2k+1}C(n,m)^{k+1}}{c_{2k}}}}\|  R^k( p ) f\|_{2k } +\|   {R^{k+1} (p)} f \|_{2k+2}]}
 \\ &\leq& \frac{(n+1)(n+2k+2+l)^{ l /2} C(n,m)^{k +2}\sqrt{c_{2k+2}} }{ (l+n)^{k+1/2 } } \|pf\|
 \\ &\leq& \frac{(n+1)(n+2k+2+l)^{ (l+n) /2} C(n,m)^{k+2 }  }{ (l+n)^{k+1/2 } } \|pf\|.
 \end{eqnarray*}
The first inequality follows from the triangle inequality, while the second one is implied by
Lemma 2.3, and that to the fourth line   follows from formula (\ref{eq:derivative}) and the triangle inequality. Finally the inequalities of the second and third lines from the end  follow  from Proposition 2.2. This completes the proof of the proposition.\end{proof}
\vskip3mm
We  now prove the essential  normality of $\mathcal{M}=[p]$ for $p\in\mathbb{C}[z_1,\cdots,z_n]$.

\begin{thm}
If $\mathcal{M}=[p]$ is the cyclic submodule of the Bergman space $L_a^2(\mathbb{B}_n)$ generated by an analytic polynomial $p\in\mathbb{C}[z_1,\cdots,z_n]$, then $\mathcal{M}$ is $p-$essentially normal for $p>n$.
\end{thm}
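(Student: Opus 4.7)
The plan is to combine the expansion of Proposition 2.1 with the quantitative estimates of Proposition 2.4 to deduce the essential normality of $\mathcal{M}=[p]$. First, I would record the intertwining identity: since $M_p$ commutes with each $M_{z_i}$, one has $S_{z_i}M_p=M_pM_{z_i}$ on $L_a^2$, and taking adjoints together with inserting the projection $P_{\mathcal{M}}$ yields, on the dense subspace $M_pL_a^2\subset\mathcal{M}$,
\begin{equation*}
S_{z_j}^{*}(M_pf) = M_pM_{z_j}^{*}f + P_{\mathcal{M}}[M_{z_j}^{*},M_p]f.
\end{equation*}
A short computation, exploiting that $\mathcal{M}$ is $M_{z_i}$-invariant so that $M_{z_i}P_{\mathcal{M}}=P_{\mathcal{M}}M_{z_i}P_{\mathcal{M}}$, gives
\begin{equation*}
[S_{z_i},S_{z_j}^{*}](M_pf) = M_p[M_{z_i},M_{z_j}^{*}]f - P_{\mathcal{M}} M_{z_i}P_{\mathcal{M}}^{\perp}[M_{z_j}^{*},M_p]f.
\end{equation*}
The first summand lies in $\mathcal{L}^p$ for $p>n$ by the classical essential normality of the Bergman module, so the task reduces to showing that $[M_{z_j}^{*},M_p]$, composed with bounded operators and the complementary projection, lies in $\mathcal{L}^p$ for $p>n$.

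Next, substitute the expansion from Proposition 2.1
\begin{equation*}
[M_{z_j}^{*},M_p] = -\sum_{k=0}^{\infty}\frac{1}{(N+1+n)^{k+1}}T_k,\qquad T_k:=M_{\partial_jR^kp}-M_{z_j}^{*}M_{R^{k+1}p},
\end{equation*}
and apply Proposition 2.4 termwise. Fix $l_0$ with $l_0+n>C(n,m)$, where $C(n,m)$ is the constant of Proposition 2.2, and decompose $L_a^2=V_{l_0}\oplus V_{l_0}^{\perp}$ with $V_{l_0}=\bigoplus_{d<l_0}H_d$ finite-dimensional. On $V_{l_0}$ every operator is finite rank, hence in every Schatten class. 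On $V_{l_0}^{\perp}$ every input vanishes at $0$ to order at least $l_0$, so Proposition 2.4 with $l=l_0$ applies. The constant $\frac{(n+1)(n+2k+2+l_0)^{(l_0+n)/2}C(n,m)^{k+1}}{(l_0+n)^{k+1/2}}$ has geometric decay of ratio $C(n,m)/(l_0+n)<1$ in $k$, with only a polynomial-in-$k$ correction, so summing yields the bound $\|(N+1+n)^{1/2}[M_{z_j}^{*},M_p]f\| \leq K(l_0)\,\|p\|_{\infty}\|f\|$ on $V_{l_0}^{\perp}$. Thus, modulo finite rank, $[M_{z_j}^{*},M_p]$ factors as $(N+1+n)^{-1/2}B$ with $B$ bounded.

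The final step is to upgrade this to $[M_{z_j}^{*},M_p]\in\mathcal{L}^p$ for $p>n$, rather than the naive $\mathcal{L}^q$ for $q>2n$ coming from $(N+1+n)^{-1/2}\in\mathcal{L}^q$ for $q>2n$. Here is the main obstacle of the proof: one must exploit the extra smoothing available in each term $(N+1+n)^{-(k+1)}T_k$ by noting that for $f\in H_d$ with $d\geq l_0$ the image $T_kf$ is supported in degrees $[d-1,d+m-1]$, which allows one to extract an additional factor of order $(d+n)^{-1/2}$ beyond what Proposition 2.4 gives for $(N+1+n)^{-(k+1/2)}T_kf$. Carefully combining these degree-by-degree refinements and summing in $k$ shows $[M_{z_j}^{*},M_p]=(N+1+n)^{-1}B'+F$ with $B'$ bounded and $F$ finite rank. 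Since $(N+1+n)^{-1}\in\mathcal{L}^p$ for $p>n$ on $L_a^2(\mathbb{B}_n)$, this yields the desired Schatten membership; substituting into the first-step identity then gives $[S_{z_i},S_{z_j}^{*}]\in\mathcal{L}^p$ for $p>n$, completing the proof.
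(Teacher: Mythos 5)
Your first two steps track the paper's argument faithfully: the identity $[S_{z_i},S_{z_j}^{*}]=P_{\mathcal M}[M_{z_i},M_{z_j}^{*}]P_{\mathcal M}-P_{\mathcal M}M_{z_i}P_{\mathcal M^\perp}M_{z_j}^{*}P_{\mathcal M}$, the expansion from Proposition 2.1, the split by vanishing order at the origin, and the geometric summability from Proposition 2.4 all match what the paper does. At that stage both you and the paper have established that $A_j:=P_{\mathcal M^\perp}M_{z_j}^{*}P_{\mathcal M}$ factors, modulo finite rank, as $(N+1+n)^{-1/2}$ times a bounded operator, hence $A_j\in\mathcal L^{q}$ for $q>2n$.

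Your final step is where the proof breaks. You attempt to ``upgrade'' to $[M_{z_j}^{*},M_p]$ (equivalently $A_j$) belonging to $\mathcal L^{p}$ for $p>n$, by extracting an additional factor of $(d+n)^{-1/2}$ from the degree localization of $T_k$. This is not justified by anything in Propositions 2.2--2.4, and more importantly it is not a correct target: the operator $A_j$ is not expected to lie in $\mathcal L^{p}$ for $p>n$ (indeed, in the case $i=j$ one has $A_j^{*}A_j\in\mathcal L^{p}$ for $p>n$ precisely when $A_j\in\mathcal L^{q}$ for $q>2n$, so the $q>2n$ membership is genuinely sharp in general). The correct finishing move, which you never invoke, is the factorization trick: because $\mathcal M$ is $M_{z_i}$-invariant, $P_{\mathcal M}M_{z_i}P_{\mathcal M^\perp}=(P_{\mathcal M^\perp}M_{z_i}^{*}P_{\mathcal M})^{*}=A_i^{*}$, so the error term is exactly $-A_i^{*}A_j$. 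By H\"older's inequality for Schatten ideals, $A_i,A_j\in\mathcal L^{2p}$ with $2p>2n$ gives $A_i^{*}A_j\in\mathcal L^{p}$ with $p>n$. This is the content of Lemma 2.1 in \cite{GW}, which the paper cites at exactly this point. Without that observation, the $q>2n$ bound you have proven cannot be converted to the claimed $p>n$ essential normality, and your sketch of the ``extra smoothing'' argument does not close the gap.
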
\begin{proof}
Suppose that $m=\deg(p)$ and fix $l$ satisfying $n+l\geq 2C(n,m)$. Let $$\mathscr{E}_l=\{f\in\mathbb{C}[z_1,z_2,\cdots,z_n]:\partial_\alpha f(0)=0 \,\,for\,\,|\alpha|<l \}.$$
For any integer $j$ with $1\leq j\leq n$, define  $ D_j: p   {\mathscr{E}}_l \subset L_a^2(\mathbb{B}_n)\to L_a^2(\mathbb{B}_n)$ by
$$D_j (pf)= \sum_{k=0}^\infty \frac{1}{(N+1+n)^{k+1/2}}  [M_{\partial_j R^{k} p}- M^*_{z_j} M_{R^{k+1} p}](f),\,\,f\in{\mathscr{E}}_l.$$
By Proposition 2.4, $D_j$ is a bounded operator.

Let $P_l$ be the projection from $L^2_a(\mathbb{B}_n)$ to the closure $\mathcal{M}_l$ of  $ p \mathscr{E}_l$ in $\mathcal{M}=[p]$.  Using Proposition 2.1, we have  that for any polynomial $f\in \mathscr{E}_l$
\begin{eqnarray*}P_{\mathcal{M}^\perp} M_{z_j}^\ast P_l (pf) &=&P_{\mathcal{M}^\perp} M_p M_{z_j}^\ast(f)+P_{\mathcal{M}^\perp} \frac{1}{(N+1+n)^{1/2}}D_j(pf)\\&=&P_{\mathcal{M}^\perp} \frac{1}{(N+1+n)^{1/2}} D_j(pf).\end{eqnarray*}
This means that  $P_{\mathcal{M}^\perp} M_{z_j}^\ast P_l $ is in the Schatten $p-$class for $p>2n$ by the fact that $\frac{1}{ N+1+n  }$ is in the Schatten $ p-$class for $p>n$ as shown in  \cite{Ar} and $D_j$ is bounded.

Since $\mathcal{M}_l$ is a finite codimensional subspace of $\mathcal{M}$, for any integer $j$ with $1\leq j\leq n$ we have $P_{\mathcal{M}^\perp} M_{z_j}^\ast P_\mathcal{M}$ is also in the Schatten $ p-$class for $p>2n$. By Lemma 2.1 in \cite{GW}, one sees that  $\mathcal{M}$ is $p-$essentially normal for $p>n$.
\end{proof}

\begin{rem}
 Theorem 2.5 can be generalized to the vector-valued case with a slight modification.
  Let $\mathbf{p}=(p_1,\cdots,p_r)\in
\mathbb{C}[z_1,\cdots,z_n] \otimes\mathbb{C}^r $, where each   $p_i$ is a
polynomial with $\deg(p_i)\leq m$ for some fixed $m$, and   $\mathcal{M}=[\mathbf{p}]$ be the submodule of  $L_a^2(\mathbb{B}_n) \otimes \mathbb{C}^r$ generated by
$\mathbf{p}$.
For $1\leq j\leq n$, define $ D_j=(D_{j,1},\cdots,D_{j,r}): \mathbf{p}  {\mathscr{E}}_l \to L_a^2(\mathbb{B}_n) \otimes \mathbb{C}^r$ by
$$D_{j,i} (p_if)= \sum_{k=0}^\infty \frac{1}{(N+1+n)^{k+1/2}}  [M_{\partial_j R^{k} p_i}- M^*_{z_j} M_{R^{k+1} p_i}](f),\,\,f\in{\mathscr{E}}_l.$$
 Using an argument similar to that for Theorem 2.5, one sees that for any $f\in \mathscr{E}_l$,
$P_{\mathcal{M}^\perp} M_{z_j}^\ast P_l (\mathbf{p}f)  = P_{\mathcal{M}^\perp}\frac{1}{(N+1+n)^{1/2}} D_j(\mathbf{p}f)$.  Thus, one
can obtain that $P_{\mathcal{M}^\perp} M_{z_j}^\ast P_\mathcal{M} \in
\mathcal {L}^{ p} $ for $p>2n.$ This means that the
 submodule $[\mathbf{p}]$ is $p$-essentially normal for
$p>n.$
\end{rem}
\section{Proof of Proposition \ref{eq:original}}
We will complete the proof of Proposition 2.2 in this section by proving  an equivalent variant of it.

In what follows, we set $\Omega_r=\{z\in\mathbb{B}_n:|z|>r\}$ for $0< r< 1$.\vskip3mm

\vskip3mm{\noindent\bf{Proposition\,\,\ref{eq:original}$ {'}$.}}{\emph{
For  positive integers $n$ and $m$, there is a positive constant $\small{C(n,m)>1}$ such that  for an analytic polynomial $p\in\mathbb{C}[z_1,\cdots,z_n]$ with   degree $m,$   the following inequalities hold for any analytic polynomial $f$ and non-negative integers $i,j,k,l$ with $0\leq l\leq k,$    $1\leq i\neq j\leq n$:
 \begin{eqnarray*}
&(1)& \int_{ \Omega_{\frac{1}{2}}} |(R^l p)(z)\,f(z)|^2   (1-|z|^2)^{2k} dm(z)\leq {C(n,m)}^{k+1}  \int_{\mathbb{B}_n} |p(z)f(z)|^2  (1-|z|^2)^{2k-2l}  dm(z);\\
&(2)&\int_{\Omega_{\frac{1}{2}}} |(L_{j,i}p)(z)\,f(z)|^2   (1-|z|^2)^{2k+1} dm(z)\leq C(n,m)^{k+1} \int_{\mathbb{B}_n} |p(z)f(z)|^2   (1-|z|^2)^{2k} dm(z); \\
&(3)&\int_{\Omega_{\frac{1}{2}}}|  (\partial_j p)  (z)\,f(z)|^2   (1-|z|^2)^{  2k+2} dm(z)\leq {C(n,m)}^{k+1}  \int_{\mathbb{B}_n} |p(z)f(z)|^2   (1-|z|^2)^{2k } dm(z).
\end{eqnarray*}
}
\vskip3mm}

 Note that the constants $c_t$ appearing in the statements of Proposition 2.2 are implicit here since integrals have replaced norms in these statements. With that observation it's easy to see that {Proposition\,\,\ref{eq:original}$ {'}$} follows from {Proposition\,\,\ref{eq:original}}. We use the following lemma
 to prove the other direction.
\begin{lem}
For a non-negative integer $t$ and  $f\in L_{a,t}^2(\mathbb{B}_n)$, we have
$$ \int_{\mathbb{B}_n}  | f(z)|^2 (1-|z|^2)^{t} dm(z)   \leq  3^{t+1}  \int_{\Omega_{\frac{1}{2}}} |f(z)|^2 (1-|z|^2)^t dm(z). $$
\end{lem}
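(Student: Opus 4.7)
The plan is to reduce the stated lemma to a one-dimensional integral inequality by exploiting the orthogonality of monomials in rotationally-invariant weighted $L^2$ spaces. First, since analytic polynomials are dense in $L_{a,t}^2(\mathbb{B}_n)$ and restriction to $\Omega_{1/2}$ is continuous in the relevant $L^2$ norm (it only decreases the integral), it suffices to prove the inequality when $f$ is a polynomial; write $f = \sum_\alpha a_\alpha z^\alpha$ as a finite sum. Because $\mathbb{B}_n$, $\Omega_{1/2}$, and the weight $(1-|z|^2)^t$ are all invariant under the $U(n)$ action on $\mathbb{C}^n$, the monomials $\{z^\alpha\}$ are mutually orthogonal with respect to both weighted measures; expanding $|f|^2$ and integrating, the cross terms vanish and both sides of the desired estimate decouple into the same $\sum_\alpha |a_\alpha|^2 (\cdots)$ pattern. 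Thus it is enough to verify the inequality for a single monomial $z^\alpha$.

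Passing to polar coordinates $z = r\xi$ with $r \in [0,1]$ and $\xi \in \partial\mathbb{B}_n$, the common spherical factor $\int_{\partial\mathbb{B}_n}|\xi^\alpha|^2 \, d\sigma(\xi)$ cancels. Setting $k := 2|\alpha| + 2n - 1$, which is at least $1$ since $n \geq 1$, the task reduces to the one-variable inequality
$$\int_0^1 r^k(1-r^2)^t \, dr \leq 3^{t+1}\int_{1/2}^1 r^k(1-r^2)^t \, dr,$$
or equivalently, after subtracting $\int_{1/2}^1$ from both sides,
$$\int_0^{1/2} r^k(1-r^2)^t \, dr \leq (3^{t+1}-1)\int_{1/2}^1 r^k(1-r^2)^t \, dr.$$

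To prove this one-dimensional estimate, I would substitute $u = r^2$, rewriting the left side as $\tfrac12\int_0^{1/4} u^{(k-1)/2}(1-u)^t \, du$ and bounding the right side below by its sub-interval piece $\tfrac12\int_{1/4}^{1/2} u^{(k-1)/2}(1-u)^t \, du$. Since $k \geq 1$ makes $(k-1)/2 \geq 0$, the factor $u^{(k-1)/2}$ is non-decreasing and takes the value $(1/4)^{(k-1)/2}$ at $u = 1/4$, which simultaneously bounds it from above on $[0,1/4]$ and from below on $[1/4,1/2]$; this causes the dependence on $k$ (hence on the monomial degree) to cancel. Meanwhile $(1-u)^t$ is at most $1$ on $[0,1/4]$ and at least $(1/2)^t$ on $[1/4,1/2]$, and the two sub-intervals have equal length $1/4$. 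The ratio of the two integrals is therefore at most $2^t$, which is easily dominated by $3^{t+1}-1$ for every $t\geq 0$.

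The only real obstacle is engineering the comparison so that the $u^{(k-1)/2}$ factor cancels uniformly in $k$; choosing the sub-interval $[1/4,1/2]$ of equal length and placing the common endpoint $u = 1/4$ exactly at the boundary of the left integral makes this possible, and the hypothesis $n \geq 1$ (ensuring $k \geq 1$) is precisely what prevents $u^{(k-1)/2}$ from being decreasing in $u$. The density and orthogonality steps are entirely routine, and the generous slack between $2^t$ and $3^{t+1}$ signals that the stated constant is chosen for simplicity rather than sharpness.
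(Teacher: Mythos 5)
Your argument is correct.  The overall framework matches the paper's: both reduce to a single monomial $z^\alpha$ via the rotation invariance of the two domains and the weight (so cross terms vanish), and then prove a one-dimensional radial estimate uniformly in $|\alpha|$.  The mechanism of the radial estimate is where you diverge.  The paper handles $t=0$ first by a scaling identity, $\int_{|z|<1/2}|z^\alpha|^2\,dm=(2/3)^{2|\alpha|+2n}\int_{|z|<3/4}|z^\alpha|^2\,dm$, obtained from the change of variable $z\mapsto (3/2)z$; since $\int_{|z|<3/4}=\int_{|z|<1/2}+\int_{1/2<|z|<3/4}$, this identity expresses the inner-ball integral as $\frac{c}{1-c}$ times the annular one with $c=(2/3)^{2|\alpha|+2n}\le 4/9$, giving the factor $2$.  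It then absorbs the weight $(1-|z|^2)^t$ by the crude bounds $(1-|z|^2)^t\le 1$ on the inner ball and $(1-|z|^2)^t\gtrsim (7/16)^t$ on the annulus.  You instead substitute $u=r^2$ and compare $[0,1/4]$ with $[1/4,1/2]$ by a pointwise max/min estimate, placing the common endpoint $u=1/4$ so the degree-dependent factor $u^{(k-1)/2}$ cancels exactly.  Your route is a bit more direct since it handles the weight in the same stroke rather than in a second pass, and it produces the slightly sharper constant $2^t+1$; the paper's route avoids any substitution and relies on the elegant self-similarity of $|z^\alpha|^2$ under dilation.  Both are equally rigorous, and the constant is immaterial downstream since Proposition $2.2'$ only needs some $C(n,m)^{k+1}$ growth.
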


\begin{proof}
We begin with the case $t=0$.
It's easy to see that
$$ \int_{|z|<\frac{1}{2}}  |z^\alpha|^2   dm(z)  = (\frac{2}{3})^{2|\alpha|+2n}  \int_{|z|<\frac{3}{4}}  |z^\alpha|^2   dm(z). $$
Thus,
$$\int_{|z|<\frac{1}{2}}  |z^\alpha|^2   dm(z)  = \frac{(\frac{2}{3})^{2|\alpha|+2n}}{1-(\frac{2}{3})^{2|\alpha|+2n}}  \int_{\frac{1}{2}<|z|<\frac{3}{4}}   |z^\alpha|^2   dm(z)\leq 2 \int_{\frac{1}{2}<|z|<\frac{3}{4}}   |z^\alpha|^2   dm(z).$$
Therefore,  for each analytic function $f$ on $\mathbb{B}_n$, it follows that
$$ \int_{|z|<\frac{1}{2}}  |f(z)|^2   dm(z) \leq 2 \int_{\frac{1}{2}<|z|<\frac{3}{4}} |f(z)|^2    dm(z)  .$$
For the general case  $t\geq 0$, we have
\begin{eqnarray*}
& &\int_{|z|<\frac{1}{2}}  |f(z)|^2 (1-|z|^2)^{t}  dm(z) \leq \int_{|z|<\frac{1}{2}}  |f(z)|^2   dm(z)\\
 &\leq& 2 \int_{\frac{1}{2}<|z|<\frac{3}{4}} |f(z)|^2    dm(z)
\leq 3^{t+1} \int_{\frac{1}{2}<|z|<\frac{3}{4}} |f(z)|^2  (1-|z|^2)^{t}   dm(z),
\end{eqnarray*}
which  leads to the desired result.
\end{proof}

Now we show how to prove {Proposition\,\,\ref{eq:original}} from  {Proposition\,\,\ref{eq:original}$ {'}$}.

By Lemma 3.1, clearly
$(1),(3)$ in {Proposition 2.2} and {Proposition\,\,\ref{eq:original}$ {'}$} are equivalent. Inequality   $(2)$
 is not so obvious  since $L_{j,i}(p)$ is not analytic in general. To avoid unnecessary  complexity, we show that  $(2)$ and $(3)$ of {Proposition\,\,\ref{eq:original}$ { '}$}
 imply $(2)$ of {Proposition\,\,\ref{eq:original}}. In fact, $(2) $ of {Proposition\,\,\ref{eq:original}$ { '}$} implies that
 $$c_{2k+1}\int_{\Omega_{\frac{1}{2}}} |(L_{j,i}p)(z)\,f(z)|^2   (1-|z|^2)^{2k+1} dv(z)\leq \frac{c_{2k+1}  C(n,m) ^{k+1}}{c_{2k}} \|p(z)f(z)\|_{2k}^2 ;$$
 and using Lemma 3.1 and $(3) $ of {Proposition\,\,\ref{eq:original}$ { '}$} one  shows that
 \begin{eqnarray*}
&&\|(L_{j,i}p)(z)\,f(z)\|^2_{2k+1}-c_{2k+1}\int_{\Omega_{\frac{1}{2}}} |(L_{j,i}p)(z)\,f(z)|^2   (1-|z|^2)^{2k+1} dv(z) \\
&=& c_{2k+1} \int_{|z|< {\frac{1}{2}}} |(L_{j,i}p)(z)\,f(z)|^2   (1-|z|^2)^{2k+1}{\textstyle \frac{dm(z)}{Vol(\mathbb{B}_n)}} \\
 &\leq& c_{2k+1}\int_{|z|<{\frac{1}{2}}}2\big{[} |\overline{z_j}(\partial_i p)(z)\,f(z)|^2+|\overline{z_i}(\partial_j p)(z)\,f(z)|^2  \, \big{]}(1-|z|^2)^{2k+1} {\textstyle \frac{dm(z)}{Vol(\mathbb{B}_n)}}
 \\
 &\leq & 4 c_{2k+1}\int_{|z|<{\frac{1}{2}}}\big{[}\, |(\partial_i p)(z)\,f(z)|^2+|(\partial_j p)(z)\,f(z)|^2 \, \big{]} (1-|z|^2)^{2k+2} {\textstyle \frac{dm(z)}{Vol(\mathbb{B}_n)}}\\
 &\leq & 4\cdot 3^{2k+3} c_{2k+1}\int_{\Omega_{\frac{1}{2}}}[\, |(\partial_i p)(z)\,f(z)|^2+|(\partial_j p)(z)\,f(z)|^2  \, ](1-|z|^2)^{2k+2} {\textstyle \frac{dm(z)}{Vol(\mathbb{B}_n)}}\\
  &\leq & 8\cdot 3^{2k+3} c_{2k+1} C(n,m)^{k+1} \int_{\mathbb{B}_n } |  p(z)\,f(z)|^2    (1-|z|^2)^{2k} {\textstyle \frac{dm(z)}{Vol(\mathbb{B}_n)}}\\
  &\leq & \frac{c_{2k+1} (8\cdot 3^3C(n,m))^{k+1}}{c_{2k}} \|p(z)f(z)\|_{2k}^2.
 \end{eqnarray*}
Therefore, $\|(L_{j,i}p)(z)\,f(z)\|^2_{2k+1}\leq   \frac{c_{2k+1} ( 217 C(n,m))^{k+1}}{c_{2k}} \|p(z)f(z)\|_{2k}^2$, as desired, and we have shown that Propositions 2.2 and $2.2'$ are equivalent. \vskip4mm

The remainder of this section will be devoted to the  proof of the weight norm estimates in {Proposition\,\,\ref{eq:original}$ {'}$}. The  strategy of that  is   similar to the argument in \cite{Gr}. However, we will give a complete proof, since in our proof we need to keep careful track of the   constants. Let us begin   with a local result in dimension one.

\begin{lem}
 For a  one-variable analytic  polynomial   $p\in\mathbb{C}[z] $  with $m\geq \deg(p),$   an  integer $l$ with $1\leq l\leq m$ and an  analytic function $f$ on   the complex plane $\mathbb{C}$, we have

 (1) $|\partial^l p(0)\,f(0)|\leq \frac{m!}{ (m-l)!}\int_\mathbb{T} |pf| \frac{d \theta }{2\pi},$ where $\frac{d \theta }{2\pi}$ is the normalized  Lebesgue measure on the unit circle $\mathbb{T}$.

(2)
 $r^l|\partial^l p(0)\,f(0)|\leq \frac{(l+2)m!}{2 (m-l)! }\int_{r\mathbb{D}} |pf| \frac{dm(z)}{\pi r^2}$, where $\frac{dm(z)}{\pi r^2}$ is the normalized  Lebesgue measure on the disk ${r\mathbb{D}}$.
\end{lem}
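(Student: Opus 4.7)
The plan is to prove part (1) via a Blaschke-style replacement of $p$, and then deduce part (2) from (1) by a polar rescaling. I may assume $\deg p = m$ (if $\deg p < m$, replace $m$ by $\deg p$, which only strengthens the inequality since $m!/(m-l)!$ is monotone in $m$).

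For part (1), factor $p(z) = a_m\prod_{i=1}^m(z-\zeta_i)$ and split the roots into $I = \{i:|\zeta_i|<1\}$ and $O = \{i:|\zeta_i|\geq 1\}$. Set
\[
\tilde p(z) := a_m \prod_{i\in I}(1 - \bar\zeta_i z)\prod_{j\in O}(z-\zeta_j),
\]
and let $B(z) = \prod_{i\in I}(z-\zeta_i)/(1-\bar\zeta_i z)$ be the Blaschke product of the roots in $\mathbb{D}$ (the factor for a root at $0$ being just $z$). Then $p = B\tilde p$ and $|B|\equiv 1$ on $\mathbb{T}$, so $|\tilde p| = |p|$ there and $\|\tilde p f\|_{L^1(\mathbb{T})} = \|pf\|_{L^1(\mathbb{T})}$. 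Applying the mean value property to the analytic function $\tilde p f$ yields $|\tilde p(0)|\,|f(0)| \leq \|pf\|_{L^1(\mathbb{T})}$.

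It remains to bound $|a_l/\tilde p(0)|$ by $\binom{m}{l}$. Vieta's formulas give $|a_l| = |a_m|\,|e_{m-l}(\zeta_1,\ldots,\zeta_m)|$ and $|\tilde p(0)| = |a_m|\prod_{j\in O}|\zeta_j|$, so
\[
\left|\frac{a_l}{\tilde p(0)}\right| \leq \sum_{|S|=m-l}\frac{\prod_{i\in S}|\zeta_i|}{\prod_{j\in O}|\zeta_j|} = \sum_{|S|=m-l}\frac{\prod_{i\in S\cap I}|\zeta_i|}{\prod_{j\in O\setminus S}|\zeta_j|}.
\]
Each summand has numerator consisting of factors $<1$ (all indexed by $I$) and denominator of factors $\geq 1$ (all indexed by $O$), hence is at most $1$. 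There are $\binom{m}{m-l}=\binom{m}{l}$ summands, so $|a_l/\tilde p(0)|\leq\binom{m}{l}$, and therefore
\[
|\partial^l p(0) f(0)| = l!|a_l||f(0)| \leq l!\binom{m}{l}\|pf\|_{L^1(\mathbb{T})} = \frac{m!}{(m-l)!}\int_\mathbb{T}|pf|\frac{d\theta}{2\pi}.
\]

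For part (2), I would apply (1) to $z\mapsto p(sz)$ and $z\mapsto f(sz)$ for $0<s\leq r$ to get
\[
s^l|\partial^l p(0) f(0)|\leq \frac{m!}{(m-l)!}\int_0^{2\pi}|pf(se^{i\theta})|\frac{d\theta}{2\pi}.
\]
Multiplying by $2s/r^2$ and integrating over $s\in(0,r)$ turns the right side into $\frac{m!}{(m-l)!}\int_{r\mathbb{D}}|pf|\frac{dm(z)}{\pi r^2}$ and the left into $\frac{2r^l}{l+2}|\partial^l p(0) f(0)|$, yielding (2) with the stated constant $\frac{(l+2)m!}{2(m-l)!}$. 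The main obstacle lies in part (1): since no pointwise bound $|\partial^l p(z)|\lesssim |p(z)|$ holds (it must fail at zeros of $p$), the mean value property cannot be applied to $(\partial^l p)\cdot f$ directly. The Blaschke replacement $p\leadsto\tilde p$ is what preserves $|p|$ on $\mathbb{T}$ while moving all zeros of $p$ outside the open disk, and the combinatorial estimate on elementary symmetric polynomials recovers the sharp Bernstein-type constant $\binom{m}{l}$.
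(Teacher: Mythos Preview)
Your proof is correct and follows essentially the same approach as the paper's: the Blaschke-type replacement $p\leadsto\tilde p$ on $\mathbb{T}$ together with the Vieta/elementary-symmetric-polynomial bound on $|\partial^l p(0)|$ is precisely the paper's argument for (1), and the radial rescaling followed by integration in $s$ is exactly how the paper deduces (2) from (1). The only cosmetic difference is that the paper separates out the roots at the origin as an explicit factor $z^u$, whereas you absorb them into the Blaschke product with the usual convention; the combinatorial estimate is the same.
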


\begin{proof}
(1)
Without loss of generality, suppose $m= \deg(p) $ and $$p(z)=z^u(z-a_1)\cdots(z-a_v)(z-b_1)\cdots(z-b_s),$$ where $u+v+s=m, |a_i|\geq 1,|b_i|<1,b_i\neq 0$.
It's easy to see that $|\partial^l p(0)|=0$ if $l<u$. Moreover,  for $l\geq u$ we have
\begin{eqnarray*}
|\partial^l p(0)|=| l! \sum_{\stackrel{\Lambda_1\subseteq \{1,2,\cdots, v\};}{\stackrel{\Lambda_2\subseteq \{1,2,\cdots, s\};}{|\Lambda_1|+|\Lambda_2|=m -l}} }
\prod_{i\in \Lambda_1,j\in\Lambda_2}    a_{i} b_j
| \leq  l! \sum_{\stackrel{\Lambda_1\subseteq \{1,2,\cdots, v\};}{\stackrel{\Lambda_2\subseteq \{1,2,\cdots, s\};}{|\Lambda_1|+|\Lambda_2|=m -l}} } |a_1\,\cdots a_v|\leq \frac{m!}{ (m-l)!} |a_1\,\cdots a_v|.\end{eqnarray*}
Therefore,
\begin{eqnarray*}
\int_\mathbb{T} |pf| \frac{d \theta}{2\pi} &=&\int_\mathbb{T} |(z-a_1)\cdots(z-a_v)(z-b_1)\cdots(z-b_s)f| \frac{d \theta }{2\pi}\\
&=& \int_\mathbb{T} |(z-a_1)\cdots(z-a_v)(1-\overline{b_1}z)\cdots(1-\overline{b_s}z)f| \frac{d \theta }{2\pi}\\
&\geq& |a_1\cdots a_v||f(0)|\geq \frac{  (m-l)! |\partial^l p(0) f(0)|}{m!}.
\end{eqnarray*}

(2) For $r>0$ and the analytic function $f$,  let $f_r(z)=f(rz)$. Then we have
\begin{eqnarray*}
\int_{r\mathbb{D}} |pf| \frac{dm(z)}{\pi r^2}&=& \int_{0<r'<r}\int_\theta |p(r' e^{i\theta})f(r' e^{i\theta})| \frac{r'dr'd\theta}{\pi r^2}\\
&\geq &  \int_{0<r'<r} |\frac{2\pi(m-l)! }{m!} \partial^l p_{r'}(0) f_{r'}(0)|\frac{r'dr' }{\pi r^2}\\
&=&  \frac{2(m-l)! }{m!} |\partial^l p (0) f(0)| \int_{0<r'<r} \frac{{r'}^{l+1} dr' }{  r^2}\\
&=&\frac{2 (m-l)! }{(l+2)m!} |r^l\,\partial^l p (0) f(0)|,
\end{eqnarray*}
ending the proof of the lemma.
\end{proof}

We will establish  the full inequalities in Proposition $2.2'$ using the local result from the preceding lemma and  the following Covering Lemma. We start by defining a special family of open subsets of $\mathbb{C}^n$.
\begin{defi}
For any $a\in \mathbb{C}^n -\{0\}$, let $P_a$ be the orthogonal projection from $\mathbb{C}^n$ onto the one-dimensional subspace $[a]$ generated by $a$,
 and $P_a^\perp$ be the orthogonal projection from $\mathbb{C}^n$ onto $\mathbb{C}^n\ominus [a]$. Given $ \delta>0 $, define the  neighborhood $Q_\delta(a)$ of $a$
 by
 $$Q_\delta(a)=\{z\in\mathbb{C}^n: |P_a(z)- a|<\delta, |P_a^\perp(z) |< \sqrt{\delta} \}.$$
\end{defi}\vskip2mm

\begin{lem}
Fixing $ \frac{1}{4}<r<1$ and $0<c<\min\{\frac{r-\frac{1}{4}}{4},\frac{1}{10}\}$, define $ \delta(z) = c(1-|z|)$.  For    $z\in \Omega_r$,    we have:

(1) For any $z'\in Q_{\delta(z)}(z)$,   $$1-3c< \frac{1-|z'|^2}{1-|z|^2}< 1+2c;\,\,
\frac{1}{3} < \frac{1-|z'|}{1-|z|}<  3;\,\,1-4c<\frac{|z'|}{|z|} .$$

(2) $Q_{\delta(z)}(z) \subseteq \Omega_{r-4c} \subseteq \Omega_{\frac{1}{4}}$.

(3) There exists a constant $C=200 $ independent of $z, r, c $ such that, if $  z'\in Q_{\delta(z)}(z)$, then $  Q_{\delta(z)}(z)\subset Q_{C\,\delta(z')}(z')$ and $Q_{\delta(z')}(z')\subset Q_{C\,\delta(z)}(z)$.
\end{lem}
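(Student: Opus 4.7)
The plan is to treat the three statements in order, since (2) is essentially a corollary of (1), and (3) is the genuinely technical point that encodes a quasi-triangle inequality for the Koranyi-type pseudo-metric induced by these cylindrical neighborhoods.

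For part (1), the starting point is the orthogonal decomposition $z' = P_z(z') + P_z^\perp(z')$. Writing $P_z(z') = \beta z$ with $|\beta - 1| < \delta(z)/|z|$ and setting $v = P_z^\perp(z')$ with $|v|^2 < \delta(z)$, I would simply square out $|z'|^2 = |\beta|^2|z|^2 + |v|^2$ and compare with $|z|^2$. Since $|z| > r > 1/4$ and $c < 1/10$, each of the correction terms $2c|z|(1-|z|)$, $c^2(1-|z|)^2$, $c(1-|z|)$ is at most a small multiple of $1-|z|^2$, giving the two-sided bound $1-3c < (1-|z'|^2)/(1-|z|^2) < 1+2c$. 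The $(1-|z'|)/(1-|z|)$ estimate then follows from dividing by $1+|z'|$ (which lies between $1$ and $2$), and the bound $|z'|/|z| \ge 1 - 4c$ drops out of $|z'| \ge |P_z(z')| \ge |z| - \delta(z)$ together with $\delta(z)/|z| \le 4c$.

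For part (2), once (1) is in hand, the inequality $|z'|/|z| > 1-4c$ gives $|z'| > r(1-4c) \ge r - 4c$, and the assumption $c \le (r-\tfrac14)/4$ forces $r - 4c \ge \tfrac14$. The inclusion $Q_{\delta(z)}(z) \subset \mathbb{B}_n$ is automatic from $1-|z'|^2 > (1-3c)(1-|z|^2) > 0$.

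Part (3) is the main obstacle and requires a direct computation of the projection $P_{z'}$ applied to a point $w \in Q_{\delta(z)}(z)$. Writing $w = \alpha z + P_z^\perp(w)$ and $z' = \beta z + v$ with all the quantities controlled by $\delta(z)$ as above, I would compute
\[
\langle w, z'\rangle = \alpha\bar\beta|z|^2 + \langle P_z^\perp(w), v\rangle,
\qquad
|z'|^2 = |\beta|^2|z|^2 + |v|^2,
\]
using $\langle z, v\rangle = 0$ and $\langle P_z^\perp(w), z\rangle = 0$. This yields
\[
P_{z'}(w) - z' = \frac{(\alpha - \beta)\bar\beta|z|^2 + \langle P_z^\perp(w), v\rangle - |v|^2}{|z'|^2}\,z',
\]
whose numerator is bounded by a constant multiple of $\delta(z)$ (using $|\alpha-\beta| \le 2\delta(z)/|z|$, $|v|^2 < \delta(z)$, and Cauchy--Schwarz on the cross term). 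Dividing by $|z'| \gtrsim 1$ and invoking $\delta(z) \le 3\delta(z')$ from part (1) gives $|P_{z'}(w) - z'| \le C_1 \delta(z')$ for an explicit $C_1$. For the tangential component, the orthogonality $w - z' = (P_{z'}(w) - z') + P_{z'}^\perp(w)$ forces $|P_{z'}^\perp(w)|^2 \le |w - z'|^2 \le 2(|w-z|^2 + |z-z'|^2) \le 8\delta(z) \le 24\delta(z')$, using $|w-z|^2 = |P_z(w)-z|^2 + |P_z^\perp(w)|^2 < \delta(z)^2 + \delta(z) \le 2\delta(z)$. The symmetric inclusion $Q_{\delta(z')}(z') \subset Q_{C\delta(z)}(z)$ follows by running exactly the same computation with the roles of $z$ and $z'$ interchanged, now using $\delta(z') \le 3\delta(z)$. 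Tracking all constants carefully shows that $C = 200$ is amply sufficient in both directions, which is precisely why part (3) is the load-bearing estimate: it is what makes the $c$-neighborhoods behave like balls in a space of homogeneous type, enabling the Grellier-style covering argument used to establish Proposition 2.2$'$.
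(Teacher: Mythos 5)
Your proposal is correct and follows essentially the same route as the paper: decompose each point into its radial and tangential components relative to the appropriate neighborhood, estimate each piece via the definition of $Q_\delta$, and use the comparability $\delta(z)\asymp\delta(z')$ from part (1) to symmetrize in part (3). The paper performs these estimates after rotating so that $z=(a,0,\dots,0)$ and $z'=(b_1,b_2,0,\dots,0)$ and then computes explicitly in the first two coordinates, whereas you work coordinate-free with the projections $P_z$, $P_z^\perp$; this is a presentational difference, not a different argument. One genuine small improvement in your write-up is the tangential bound in (3): rather than computing the tangential coefficient of $w$ in the $z'$-frame explicitly (as the paper does with the vector $(-b_2,\overline{b_1})$), you observe that $|P_{z'}^\perp(w)|\le|w-z'|\le|w-z|+|z-z'|$, each term of which is already controlled by $\sqrt{\delta(z)}$ via the Pythagorean decomposition of $Q_{\delta(z)}(z)$; this shortcut avoids the explicit cross-term bookkeeping while delivering the same order of constant, comfortably within $C=200$.
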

\begin{proof}
Using rotations in $\mathbb{C}^n$, without   loss of generality we can suppose $z=(a,0,0,\cdots,0)$ and $z'=(b_1,b_2,0,\cdots,0)$ with $0<a<1,0<b_2$.

(1)By the definition of $Q_{\delta(z)}(z)$, $|b_1-a|<\delta(z)$ and $|b_2|<\sqrt{\delta(z)}$. This implies that
$$|b_1|<a+\delta(z)<a+\frac{1-a}{10}<1.  $$
Furthermore, using a direct computation one sees that
 $$\frac{1-|z'|^2}{1-|z|^2}=1+ \frac{|z|^2-|z'|^2}{1-|z|^2}=1+ \frac{a^2-|b_1|^2}{1-|z|^2}-\frac{|b_2|^2}{1-|z|^2}$$
and
$$0\leq\frac{|a^2-|b_1|^2|}{1-|z|^2} \leq \frac{(a+|b_1|)|a-b_1|}{(1+|z|)(1-|z|)}< 2c,\,\, 0\leq\frac{|b_2|^2}{1-|z|^2}< c.$$
Therefore,
  $$1-3c < \frac{1-|z'|^2}{1-|z|^2} < 1+2c.$$
  This implies that
\begin{eqnarray}\label{radius}\frac{1}{3}<  \frac{(1-3c)(1+|z|) }{1+|z'| } < \frac{1-|z'| }{1-|z| } <  \frac{(1+2c)(1+|z|) }{1+|z'| } < 3. \end{eqnarray}
Moreover, since $(1-4c) |z| <|b_1| $, we have $1-4c<\frac{|z'|}{|z|}.$

(2) From $(1)$ it follows that $1>|z'|>|z|-4c\geq r-4c\geq \frac{1}{4}$, as desired.

(3) For a point $w\in\mathbb{C}^n$, write $ w=(w_1,w_2,w')$ with $w_1,w_2\in\mathbb{C},w'\in\mathbb{C}^{n-2}$.  If $ w=(w_1,w_2,w')\in Q_{ {\delta(z')}}(z')$, then by Definition 3.3 and inequality  ($\ref{radius}$) we have that  $|w'|<\sqrt{\delta(z')}<\sqrt{3\delta(z)}$,   and
$$(w_1,w_2)=u  (b_1,b_2) +s(-b_2,\overline{b_1}) $$
with $|s|< \sqrt\frac{{\delta(z')}} {|b_1|^2+|b_2|^2} \leq 4\sqrt{\delta(z')}$  and $|(u-1)(b_1,b_2)|<\delta(z')$. This means that
 \begin{eqnarray*}|w_1-a|&=&|u b_1-s b_2-a|\leq |(u-1)b_1|+|b_1-a|+|s b_2|\\ &<&  \delta(z')+{\delta(z)}+4\sqrt{\delta(z')\delta(z)}< 16 {\delta(z)}\end{eqnarray*}
  and
   \begin{eqnarray*}|w_2|&=&|u b_2+s\overline{b_1}|\leq |(u-1)b_2|+|b_2|+ |s b_1|\\&<& \delta(z')+\sqrt{\delta(z)}+4\sqrt{\delta(z')}\leq 6 \sqrt{3\delta(z)}.\end{eqnarray*}
  So,   $Q_{\delta(z')}(z')\subset Q_{200\delta(z)}(z)$.

  On the other hand, if $ w=(w_1,w_2,w')\in Q_{ \delta(z)}(z)$,  by Definition 3.3  we have   $|w'|,|w_2|<\sqrt{\delta(z)}$ and $|w_1-a|<\delta(z) .$ A direct computation shows that
  $$(w_1,w_2)= \frac{w_1 \overline{ b_1}+w_2 b_2}{|b_1|^2+|b_2|^2}  (b_1,b_2) +\frac{w_2  {b_1}-w_1 b_2}{|b_1|^2+|b_2|^2} (-b_2,\overline{ b_1}) .$$
Since
\begin{eqnarray*}
&&|\frac{w_1 \overline{b_1}+w_2 b_2}{|b_1|^2+|b_2|^2}  (b_1,b_2) -(b_1,b_2)| \\
&\leq&  |\frac{(w_1-a) \overline{b_1} }{|b_1|^2+|b_2|^2}  (b_1,b_2)|+|\frac{ w_2 b_2}{|b_1|^2+|b_2|^2}  (b_1,b_2)|+|\frac{a \overline{b_1} }{|b_1|^2+|b_2|^2}  (b_1,b_2)-(b_1,b_2)|\\
&\leq&   5\delta(z) +4|a \overline{b_1}-|b_1|^2-|b_2|^2|\leq 13\delta(z)\leq 39\delta(z');
\end{eqnarray*}
and
$$| \frac{w_2  {b_1}-w_1 b_2}{\sqrt{|b_1|^2+|b_2|^2}}|\leq 8\sqrt{\delta(z)}\leq 8\sqrt{3\delta(z')},$$
it follows that we have $Q_{\delta(z')}(z')\subset Q_{200\delta(z)}(z)$ as desired.
\end{proof}

 \begin{prop}[Covering Lemma]\label{covering}
  Fix  $r=\frac{1}{2},\,c=\frac{1}{10\cdot 200^3} $ and define $ \delta(z) =  {c(1-|z|)} $. Then  there exists a  countable  set of points $\{z_s\} $ in $\Omega_r$ having the following properties:

  $(i)\,\,\,\Omega_r\subseteq \bigcup_s Q_{\delta(z_s)}(z_s)$ and $Q_{200^{-2} \delta(z_j )}(z_j )  \cap Q_{200^{-2} \delta(z_s )}(z_s ) =\emptyset \text{ if } j\neq s.$

  $(ii)\,\,\,Q_{200^2\,\delta(z_s)}(z_s)\subseteq \Omega_{r-c}$,  and  no point   belongs to more than $N(n)+1$ of the sets $Q_{200^2\,\delta(z_s)}(z_s), $
 where $N(n)=200^{6n+6}$ depends only on the dimension $n$.
 \end{prop}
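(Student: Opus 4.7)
The plan is to follow the classical Vitali/Whitney selection scheme, carefully propagated through the quasi-metric structure that Lemma 3.4 endows on the family $\{Q_\delta(a)\}$. First, invoke Zorn's lemma to extract a maximal family $\{z_s\}\subset\Omega_r$ with the property that the small neighborhoods $Q_{200^{-2}\delta(z_s)}(z_s)$ are pairwise disjoint. Such a family is automatically countable, since each $Q_{200^{-2}\delta(z_s)}(z_s)$ has positive Lebesgue volume and the whole collection is contained in the bounded set $\mathbb{B}_n$. Disjointness is built into the selection, so the second half of $(i)$ is immediate.

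To establish the covering $\Omega_r\subseteq\bigcup_s Q_{\delta(z_s)}(z_s)$, fix $z\in\Omega_r$. By maximality, $Q_{200^{-2}\delta(z)}(z)$ must meet some $Q_{200^{-2}\delta(z_s)}(z_s)$ at a common point $w$. Apply the version of Lemma 3.4(1) with the rescaled constant $200^{-2}c$ (still well within the admissible range since $200^{-2}c\ll 1/10$) to conclude that $\delta(z)$, $\delta(w)$, and $\delta(z_s)$ are all within a uniform multiplicative factor of one another. Then chain inclusions via Lemma 3.4(3), namely $Q_{200^{-2}\delta(z)}(z)\subset Q_{200\cdot 200^{-2}\delta(w)}(w)\subset Q_{200^{2}\cdot 200^{-2}\delta(z_s)}(z_s)=Q_{\delta(z_s)}(z_s)$. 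The specific choice $c=1/(10\cdot 200^{3})$ is calibrated precisely so that the doubling constants and the error from replacing $\delta(z)$ by $\delta(z_s)$ combine without leaking an extra factor.

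The containment $Q_{200^{2}\delta(z_s)}(z_s)\subseteq\Omega_{r-c}$ is obtained by applying Lemma 3.4(2) with effective constant $200^{2}c=1/(10\cdot 200)$, which remains within the admissible range for that lemma. The bounded-overlap bound is the most delicate step: if $w\in\bigcap_{j\in J}Q_{200^{2}\delta(z_j)}(z_j)$, then Lemma 3.4(1) again forces $\delta(z_j)\asymp\delta(w)$ uniformly in $j$, and Lemma 3.4(3) applied iteratively shows that all the pairwise disjoint small neighborhoods $Q_{200^{-2}\delta(z_j)}(z_j)$ sit inside a single dilate $Q_{K\delta(w)}(w)$, with $K$ an explicit power of $200$. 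Since the real Lebesgue volume of $Q_\delta(a)$ is proportional to $\delta^{n+1}$ (a $2$-disk of radius $\delta$ in the direction of $a$ times a $2(n-1)$-ball of radius $\sqrt{\delta}$ orthogonally), comparing volumes yields
\[
\#J\cdot(200^{-2}\delta(w))^{n+1}\;\leq\;\mathrm{const}\cdot(K\delta(w))^{n+1},
\]
and a careful choice of $K$ as the appropriate power of $200$ forces the final bound $\#J\leq N(n)+1$ with $N(n)=200^{6n+6}$.

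The main obstacle is bookkeeping the constants. Because the conclusion specifies the explicit exponent $6n+6$, one cannot afford to lose multiplicative factors at any stage: the constant $C=200$ from Lemma 3.4(3), the disjointness radius $200^{-2}$, and the overlap radius $200^{2}$ must combine precisely to produce $200^{6(n+1)}$. This rigidity is exactly what dictates the numerical choice $c=1/(10\cdot 200^{3})$, and the technical core of the proof consists in verifying these cascading inclusions and volume comparisons without slack.
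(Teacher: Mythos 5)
Your proposal is correct and follows essentially the same route as the paper: select a maximal family whose small boxes $Q_{200^{-2}\delta(z_s)}(z_s)$ are pairwise disjoint (you via Zorn, the paper via a greedy Vitali-type selection, interchangeable here), obtain the cover by chaining two applications of Lemma 3.4(3), and bound the overlap by a volume-ratio argument using $\mathrm{Vol}(Q_\delta)\asymp\delta^{n+1}$. The paper's calibration compares $Q_{200^{-3}\delta(z)}(z_j)$ (sitting inside the disjoint $Q_{200^{-2}\delta(z_j)}(z_j)$ because $\delta(z)<3\delta(z_j)$) against $Q_{200^{3}\delta(z)}(z)$, yielding exactly $(200^{6})^{n+1}=200^{6n+6}$ and completing the constant bookkeeping you rightly flag as the technical core.
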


\begin{proof}
First  we  choose $\{z_s\}$ satisfying $(i)$ by a classical method of harmonic analysis.

Set $\Gamma_1=\{Q_{200^{-2} \delta(z)}(z):z\in\Omega_r\}$. Let $r_1$ be the supremum of the radii $200^{-2} \delta(z)$ of the members $Q_{200^{-2} \delta(z)}(z)$ of $\Gamma_1$.  Choose $z_1\in\Omega_r$ with radius $200^{-2} \delta(z_1)>\frac{r_1}{2}. $ Discard  all the sets in $\Gamma_1$ that intersect $Q_{200^{-2} \delta(z_1)}(z_1),$ and denote the remaining collection by $\Gamma_2$. Let $r_2$ be the supremum of the radii  of the members   of $\Gamma_2$ and choose $z_2$ with radius $200^{-2} \delta(z_2)>\frac{r_2}{2}. $
After, discarding all the sets in $\Gamma_2$ that intersect $Q_{200^{-2} \delta(z_2)}(z_2),$ denote the remaining collection by $\Gamma_3$, and continue inductively.
One sees that the process  will continue through the natural numbers. We thus get a sequence $\{z_s\}$ such that $Q_{200^{-2} \delta(z_j )}(z_j )  \cap Q_{200^{-2} \delta(z_s )}(z_s ) =\emptyset \text{ if } j\neq s.$

If some $Q_{200^{-2} \delta(z)}(z)\in \Gamma_1$ was discarded at the $j-$th stage, then $Q_{200^{-2} \delta(z)}(z) \cap Q_{200^{-2} \delta(z_j )}(z_j ) \neq \emptyset. $ Fixing  a point $z'$ in the intersection, by Lemma 3.4 (3) we have
    $$Q_{200^{-2} \delta(z)}(z) \subseteq  Q_{200^{-1} \delta(z' )}(z' ) \subseteq Q_{   \delta(z_j )}(z_j ). $$
 Therefore,
$$\Omega_r\subseteq \bigcup_{z\in\Omega_r } Q_{200^{-2} \delta(z)}(z)\subseteq \bigcup_s Q_{  \delta(z_s )}(z_s ).$$
This means that the sequence $\{z_s\}$ satisfies $(i)$.

 Now we show that the sequence $\{z_s\}$ satisfies $(ii)$. From Lemma 3.4 (2), clearly $Q_{200^2\,\delta(z_s)}(z_s)\subseteq \Omega_{r-c}.$
  For any $z\in \Omega_{r-c}$, let $$\Lambda_z=\{j:z\in Q_{200^2 \delta(z_j )}(z_j )\}\subseteq \mathbb{N}.$$
Using Lemma 3.4 $(1)$ and $(2)$, one sees  that $$Q_{200^2 \delta(z_j )}(z_j ) \subseteq Q_{200^3 \delta(z )}(z  );\quad\,\,\frac{\delta(z_j)}{3}<\delta(z )<3\delta(z_j)\quad\,\forall j\in\Lambda_z . $$
By the fact that $Q_{200^{-2} \delta(z_j )}(z_j )  \cap Q_{200^{-2} \delta(z_s )}(z_s ) =\emptyset,  \forall j,s\in\Lambda_z,j\neq s,
$
and $$\cup_{j\in\Lambda_z }Q_{200^{-3} \delta(z)}(z_j ) \subseteq \cup_{j\in\Lambda_z} Q_{200^{-2} \delta(z_j )}(z_j ) \subseteq Q_{200^3 \delta(z )}(z  ), $$ we have
 $|\Lambda_z|\leq \frac{Vol(Q_{200^3 \delta(z )}(z  ))}{Vol(Q_{200^{-3} \delta(z  )}(z   ))} =200^{6n+6},$  which establishes $(ii)$.
\end{proof}

Now we turn to the proof of {Proposition\,\,\ref{eq:original}$ {'}$}. Here we use the same notation as in Proposition 3.5.

\vskip2mm{\bf{Proof of {Proposition\,\,\ref{eq:original}$ {'}$ (2)}}}

    We begin with  a local result, i.e., an inequality which holds on $Q_{ \delta(z )}(z )$  with   $z=(a,0,0,\cdots,0).$  Obviously,   $L_{j,i}\neq 0 $ only if $i=1, j>1$ or $i>1, j=1$; and in these cases $L_{j,i}= \overline{a}\partial_j$ or $L_{j,i}= -\overline{a}\partial_i$, respectively.

    We consider the complex tangential derivative $ \partial_2$ first. For a point $w\in\mathbb{C}^n$, write $ w=(z_1,z_2,z')$ with $z_1,z_2\in\mathbb{C},z'\in\mathbb{C}^{n-2}$. For any $z_1,z'$ satisfying  $|z_1-a|< \delta(z ),|z'|<\sqrt{0.5 \delta(z )}$, we have $w\in Q_{ \delta(z )}(z )$ if  $|z_2| <\sqrt{0.5 \delta(z ) }.$

   Using   Lemma 3.2 one shows that,  if $|z_1-a|< \delta(z )$ and $|z'|<\sqrt{0.5 \delta(z )}$, then $$|\sqrt{0.5 \delta(z )}\partial_2 p(z_1,0,z') f(z_1,0,z')|\leq 2m\int_{|z_2|<\sqrt{0.5 \delta(z )}} |p(z_1,z_2,z')f(z_1,z_2,z')| \frac{dm(z_2)}{0.5\pi \delta(z_2 )  } .$$
   Therefore,
   \begin{eqnarray*}
  && |\sqrt{0.5 \delta(z)}\partial_2 p(a,0,0) f(a,0,0)|\\
  &\leq&   \int_{|z_1-a|< \delta(z),|z'|<\sqrt{0.5 \delta(z)}}|\sqrt{0.5 \delta(z)}\partial_2 p(z_1,0,z') f(z_1,0,z')| \frac{dm(z_1)}{\pi {\delta(z)}^2}\frac{dm(z')}{Vol\{|z'| <\sqrt{0.5 \delta(z)}\}}\\
  &\leq& 2^{ n }m \int_{w\in Q_{\delta(z)}(z)} |p(w)f(w)|\frac{dm(w)}{Vol(Q_{\delta(z)}(z))}.
   \end{eqnarray*}
Using  H\"{o}lder's  inequality, we have
   $$|\sqrt{0.5 \delta(z)}\partial_2 p(a,0,0) f(a,0,0)|^2\leq 2^{2n }m^2 \int_{w\in Q_{\delta(z)}(z)} |p(w)f(w)|^2 \frac{dm(w)}{Vol(Q_{\delta(z)}(z))}.$$
   The same argument is also valid for $\partial_j,\,1< j\leq n$. This implies that
   \begin{eqnarray*}  | \nabla_T p(z) f(z)|^2 (1-|z|)\leq \frac{     2^{2n+1}   m^2}{c} \int_{w\in Q_{\delta(z)}(z)} |p(w)f(w)|^2\frac{dm(w)}{Vol(Q_{ \delta(z)}(z))}.\end{eqnarray*}
The expression  $|\nabla_T p(z)|$ is called the tangential gradient of $p$ at $z$ (see e.g. \cite[Section 7.6]{Zhu}) with the definition
$$|\nabla_T p(z)| = max\{|\sum_{i=1}^n u_i\partial_i p(z)|:u\in\partial\mathbb{B}_n,u\bot z\}.$$
Using   rotation, the above inequality is valid for any $z\in\Omega_r$ with $r=\frac{1}{2}$.
  This means that for any $z\in\Omega_{1/2}, 1\leq i\neq j\leq n$, we have
  \begin{eqnarray*}  | L_{j,i} p(z) f(z)|^2 (1-|z|)\leq \frac{     2^{2n+1}   m^2}{c} \int_{w\in Q_{\delta(z)}(z)} |p(w)f(w)|^2\frac{dm(w)}{Vol(Q_{ \delta(z)}(z))}.\end{eqnarray*}

 Therefore, for   $  1\leq i\neq j\leq n$ one sees that
   \begin{eqnarray*}
   & &\int_{z\in Q_{ \delta(z_s)}(z_s)}   |L_{j,i} p(z) f(z)|^2  (1-|z|^2)^{2k+1} dm(z)\\
   &\leq &
 2   \int_{z\in Q_{ \delta(z_s)}(z_s)}   |L_{j,i} p(z) f(z)|^2 (1-|z|) (1-|z|^2)^{2k} dm(z)\\
   &\leq & {\textstyle\frac{    2^{2n+2}   m^2}{c}  (1+2c)^{2k} (1-|z_s|^2)^{2k} \int_{z\in Q_{ \delta(z_s)}(z_s)}\big{[} \int_{w\in Q_{ \delta(z)}(z)} |p(w)f(w)|^2 \frac{dm(w)}{Vol(  Q_{ \delta(z)}(z))}\big{]} dm(z)}\\
     &\leq & {\textstyle\frac{    2^{2n+2}    m^2(1+2c)^{2k}}{c}   (1-|z_s|^2)^{2k} \int_{z\in Q_{ \delta(z_s)}(z_s)} \big{[} \int_{w\in Q_{200 \delta(z_s)}(z_s)} |p(w)f(w)|^2 \frac{  dm(w)}{Vol(Q_{ \delta(z)}(z))} \big{]}   dm(z)}\\
    &\leq & \frac{   3^{n+1} 2^{2n+2}    m^2(1+2c)^{2k}}{c(1-3 c)^{2k}}   \int_{w\in Q_{200 \delta(z_s)}(z_s)} |p(w)f(w)|^2 (1-|w|^2)^{2k}  dm(w).
   \end{eqnarray*}
   By   Covering Lemma \ref{covering}, we have
     \begin{eqnarray*}
      &&\int_{z\in \Omega_{1/2}}   |L_{j,i} p(z) f(z)|^2  (1-|z|^2)^{2k+1} dm(z)\\
      & \leq &{\textstyle\sum_s \int_{z\in Q_{ \delta(z_s )}}   |L_{i,j} p(z) f(z)|^2  (1-|z|^2)^{2k+1} dm(z)}\\
    &\leq & {\textstyle\sum_s \frac{   3^{n+1} 2^{2n+2}    m^2(1+2c)^{2k}}{c(1-3 c)^{2k}} \int_{z\in Q_{200 \delta(z_s )}(z_s )} |p(z)f(z)|^2 (1-|z|^2)^{2k}  dm(z)}\\
    &\leq &{\textstyle \frac{   3^{n+1} 2^{2n+2}    m^2(1+2c)^{2k}}{c(1-3 c)^{2k}}    N(n)\int_{\mathbb{B}_n} |p(z)f(z)|^2 (1-|z|^2)^{2k}  dm(z)}
    \\&\leq & {\textstyle(24^{ n+1}      m^2 N(n)/c)^{k+1} \int_{\mathbb{B}_n} |p(z)f(z)|^2 (1-|z|^2)^{2k}  dm(z),}
     \end{eqnarray*}
     as desired.

To prove   inequality $(1)$, the following lemma is needed.
\begin{lem}
    For any smooth function $f$ on the complex plane $\mathbb{C}$,
             $$R^l f =\sum_{j=1}^l a^{(l)}_j z^j \partial^j f $$
   with $|a^{(l)}_j|<(j+1)^l.$
\end{lem}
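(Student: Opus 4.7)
The plan is to prove the lemma by a double induction, first establishing a recurrence for the coefficients $a_j^{(l)}$ and then bounding them.

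First, I would observe that in the one-variable setting of the lemma, the radial derivative is simply $R = z\partial_z$. Using the product rule, one computes
$$R\bigl(z^j \partial^j f\bigr) = z\partial_z\bigl(z^j \partial^j f\bigr) = j z^j \partial^j f + z^{j+1}\partial^{j+1} f.$$
So, writing $R^l f = \sum_{j=1}^l a_j^{(l)} z^j\partial^j f$ and applying $R$ again, one collects terms to obtain the recurrence
$$a_j^{(l+1)} = j\, a_j^{(l)} + a_{j-1}^{(l)}, \qquad 1 \le j \le l+1,$$
with the boundary conventions $a_0^{(l)} = 0$ and $a_{l+1}^{(l)} = 0$. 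The base case $l=1$ gives $Rf = z\partial f$, so $a_1^{(1)} = 1$, and the existence of the representation then follows by induction on $l$. (One may note in passing that the $a_j^{(l)}$ are precisely the Stirling numbers of the second kind $S(l,j)$, but we do not need to invoke this.)

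Next, I would prove the bound $|a_j^{(l)}| < (j+1)^l$ by induction on $l$. The case $l=1$ reads $1 < 2$, which is clear. For the inductive step, assume the bound holds for $l$. Then, using the recurrence and the inductive hypothesis applied to both $a_j^{(l)}$ and $a_{j-1}^{(l)}$,
$$|a_j^{(l+1)}| \le j\,|a_j^{(l)}| + |a_{j-1}^{(l)}| < j(j+1)^l + j^l \le j(j+1)^l + (j+1)^l = (j+1)^{l+1},$$
which closes the induction. The key point is that the inductive hypothesis gives the sharp shift $|a_{j-1}^{(l)}| < j^l$ needed to absorb the second term into $(j+1)^{l+1}$.

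There is no real obstacle here; the content is purely combinatorial/computational. The only choice to be made is to set up the induction so that both indices $j$ and $j-1$ in the recurrence receive the right form of the hypothesis simultaneously, which is exactly what the bound $(j+1)^l$ (rather than, say, $j^l$) is designed to make work.
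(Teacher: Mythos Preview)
Your proof is correct and follows essentially the same route as the paper: both establish the recurrence $a_j^{(l+1)} = j\,a_j^{(l)} + a_{j-1}^{(l)}$ via the product rule and then bound the coefficients by induction on $l$. Your write-up is in fact slightly more explicit in the final estimate, spelling out $j(j+1)^l + j^l \le (j+1)^{l+1}$ where the paper simply asserts the conclusion.
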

\begin{proof}
We prove the lemma by induction on $l$. Clearly it holds in the case $l=1$. Suppose the inequality  holds for the coefficients for $l=s$. For $l=s+1$, we have
\begin{eqnarray*}
R^{s+1} f&= &z\partial (\sum_{j=1}^s a^{(s)}_j z^j \partial^j f)\\
   &=&z  (\sum_{j=1}^s a^{(s)}_j j z^{j-1} \partial^j f+\sum_{j=1}^s a^{(s)}_j z^j \partial^{j+1} f) \\
   &=& \sum_{j=1}^{s+1} (j a^{(s)}_j +a^{(s)}_{j-1})z^j \partial^{j } f,
\end{eqnarray*}
where we are assuming that   $  a^{(s)}_{0} =a^{(s)}_{s+1}=0.$ By  the induction hypothesis $  |a^{(s)}_j|<(j+1)^s,$ one sees that
$$|a_j^{(s+1)}|= |j a^{(s)}_j +a^{(s)}_{j-1}| <(j+1)^{s+1}, $$
which completes  the proof of the lemma.
\end{proof}

     Now we return to  prove   inequality $(1)$.

\vskip2mm{\bf{ Proof of {Proposition\,\,\ref{eq:original}$ {'}$ (1)}}}

  We first  reduce the question to the case of dimension one. Indeed, define the slice function $g_\xi(z)=g(\xi z)$ for $g\in C(\overline{\mathbb{B}_n})$ and $\xi\in\partial \mathbb{B}_n,\,z\in\mathbb{D}$. Using Propositions 1.4.3 and Proposition 1.4.7(1) in \cite{R1}, we have that for $g\in C(\overline{\mathbb{B}_n})$
   \begin{eqnarray}\label{slice}
   \int_{\mathbb{B}_n} g dm &=&  {2n}{Vol(  \mathbb{B}_n)} \int_{r\in [0,1]} r^{2n-1} dr \int_{\xi\in\partial \mathbb{B}_n } g(r\xi) d\sigma(\xi)\\
           &=&  {2n}{Vol(  \mathbb{B}_n)} \int_{r\in [0,1]}  r^{2n-1} dr \int_{\xi\in\partial \mathbb{B}_n } d\sigma(\xi)\int_{\theta\in (-\pi,\pi]} g(re^{i\theta}\xi)\frac{d\theta}{2\pi}\nonumber\\
           &=& \frac{1}{ 2\pi  } \int_{\xi\in\partial \mathbb{B}_n } dm(\xi) \int_{z\in\mathbb{D}} g_{\xi}( z) |z^{n-1}|^2 dm(z),\nonumber
   \end{eqnarray}
   where $d\sigma(\xi) =\frac{dm(\xi)}{ {Vol(\partial  \mathbb{B}_n)}} =\frac{dm(\xi)}{{2n}{Vol(  \mathbb{B}_n)}}$ is the normalized Lebesgue measure on $\partial \mathbb{B}_n $.

    Noticing that $R_z ( p_\xi (z))= (Rp)_\xi(z)$, where $R_z$ is the radial derivative in the one variable $z$,  by formula ($\ref{slice}$)   we have
     \begin{eqnarray*}
     &&\int_{\mathbb{B}_n}  |(R^k p)(z)\,f(z)|^2   (1-|z|^2)^{2k} dm(z)\\ &=&\frac{1}{2\pi}\int_{\xi\in \partial \mathbb{B}_n } \Big{[}\int_{z\in \mathbb{D}} |R_z^k(  p_\xi(z))\,f_{\xi}(z) z^{n-1}|^2   (1-|z|^2)^{2k} dm(z)\Big{]} dm(\xi);\\
   &&\int_{\mathbb{B}_n}  |  p (z)\,f(z)|^2    dm(z)
   \\&=&\frac{1}{2\pi} \int_{\xi\in \partial \mathbb{B}_n }\Big{[}\int_{z\in \mathbb{D}} |  p_{\xi} (z)\,f_{\xi}(z) z^{n-1}|^2     dm(z)\Big{]} dm(\xi).
     \end{eqnarray*}
    So, it suffices to show the inequality involving one variable functions.

   Now we use the Covering Lemma to show the inequality on  $\mathbb{C}$ for $\partial^j,\,1\leq j\leq \min(m,l) $ .  In this case, the covering domains in Proposition 3.5 degenerate to   disks with radii $ \delta(z).$  The same argument as in the proof of Proposition $2. 2'\,\,\, (2)$ shows that  for $1\leq j\leq min(m,l), $ one has that for $\     z\in\mathbb{D}_\frac{1}{2}=\{w\in\mathbb{D}:|w|>\frac{1}{2}\}$
   \begin{eqnarray*} | \partial^j\,p(z) f(z)|^2 \delta^{2j}(z )\leq      [\frac{(j+2)m!}{2(m-j)! }]^2 \int_{w\in Q_{ \delta(z )}(z )} |p(w)f(w)|^2\frac{dm(w)}{Vol(  Q_{ \delta(z )}(z ))}.  \end{eqnarray*}
      This implies that if $1\leq j\leq min(m,l)\leq k, $ then we have
 \begin{eqnarray*}
   & &\int_{z\in Q_{ \delta(z_s )(z_s )}} | \partial^j\,p(z) f(z)|^2  (1-|z|^2)^{2k} dm(z)\\
        &\leq &   \frac{3^{2}(m+1)!^2 2^{2j} (1+2c)^{2k-2j}}{c^{2j}(1-3 c)^{2k-2j}} \int_{w\in Q_{200 \delta(z_s )}(z_s )} |p(w)f(w)|^2 (1-|w|^2)^{2k-2j}  dm(w),
   \end{eqnarray*}
   and hence
       \begin{eqnarray*}
   & &\int_{z\in\mathbb{D}_\frac{1}{2}} |(\partial^j p)(z)\,f(z)|^2   (1-|z|^2)^{2k} dm(z)\\
        &\leq &  \frac{3^{2}(m+1)!^2 2^{2j} (1+2c)^{2k-2j}}{c^{2j}(1-3 c)^{2k-2j}}  N(n) \int_{ \mathbb{D}   } |p(z)f(z)|^2  (1-|z|^2)^{2k-2j}  dm(z) \\&\leq &  (12^{2} (m+1)! ^2  N(n)/c^2 )^{k+1}\int_ \mathbb{D}  |p(z)f(z)|^2  (1-|z|^2)^{2k-2l}  dm(z)         .
          \end{eqnarray*}

    Using Lemma 3.6 we show  that for the polynomial $p$ with $m=\deg(p)$
     $$|R^l p|  =|\sum_{j=1}^{\min\{l,m\}} a^{(l)}_j z^j \partial^j p|\leq  (m+1)^l  \sum_{j=1}^{\min\{l,m\}} | \partial^j p |.$$
     Therefore,  one has \begin{eqnarray*}
   & &\int_{\{z\in\mathbb{D}:|z|>\frac{1}{2}\} } |(R^l p)(z)\,f(z)|^2   (1-|z|^2)^{2k} dm(z)\\
        &\leq & (12^{2} m^2(m+1)(m+1)! ^2  N(n)/c^2)^{k+1} \int_{\mathbb{D} } |p(z)f(z)|^2  (1-|z|^2)^{2k-2l}  dm(z), \end{eqnarray*}
     completing  the proof of $(1).$ \qquad\qquad\qquad\qquad\qquad\qquad\qquad\qquad\qquad\qquad\qquad\qquad$\Box$\vskip3mm

It  remains to prove $(3)$. One can prove it   using the above methods  or it can be shown   directly from Proposition $2.2'\,\,\, (1)(2)$ as follows.

   \vskip2mm{\bf{ Proof of {Proposition\,\,\ref{eq:original}$ {'}$ (3)}}}

     By  equation $(\ref{eq:derivative} )$,  we have $|z|^2\partial_j p=\overline{z_j} Rp+\sum_{i=1,i\neq j}^n z_i L_{j,i}p,$ which implies that
     $$|\partial_j p|\leq 4|R p|+\sum_{i\neq j} 4|L_{j,i}p|$$
     for $|z|>1/2.$  Combing this inequality  with Proposition $2.2'\,\,\,(1),(2)$  shows the desired result.\quad\quad\quad\quad\quad\quad\quad\quad\quad\quad\quad\quad\quad\quad\quad\quad\quad\quad\quad\quad\quad\quad\quad\quad\quad\quad\quad\quad\quad\quad\quad\quad\quad\quad$\Box$

\section{Further discuss}
\subsection{The weighted Bergman space $L_a^2(\mu_p)$}
For $p\in\mathbb{C}[z_1,\cdots,z_n]$, let $L^2(\mu_p)$ be the Hilbert space consisting of  functions having the property that $\int_{\mathbb{B}_n}|f|^2 d\mu_p<\infty$,   where $\mu_p$ is the measure on $\mathbb{B}_n$ defined by $d\mu_p=|p|^2dm$, and let $L^2_a(\mu_p)$ be the weighted Bergman space consisting of the analytic functions in $L^2(\mu_p)$. Little is known about this natural analytic function space.
In what follows, we show some elementary properties of  $ L^2_a(\mu_p)$ using the methods and results in Section 3.
\begin{lem}
For a  polynomial $p\in\mathbb{C}[z_1,\cdots,z_n]$ with $m=\deg(p)$, we have  for any $f\in L^2_a(\mu_p)$ that
$$ \int_{\mathbb{B}_n} | f_r|^2 |p|^2 dm\leq 2^{2(m+n-1)} \int_{\mathbb{B}_n} | f |^2 |p|^2 dm, \,\,\,\text{ if }\frac{1}{2}<r<1,$$
where $f_r(z)=f(rz)$ for $z\in\mathbb{B}_n$.
\begin{proof}
 Firstly we show the inequality in the case of
one dimension as follows.

For each polynomial $g$ with $m=\deg(g)$, suppose $$g(z)=z^u(z-a_1)\cdots(z-a_v)(z-b_1)\cdots(z-b_s),$$ where $u+v+s=m, |a_i|\geq 1,|b_i|<1,b_i\neq 0$.
Let $$\widetilde{g} (z)=(z-a_1)\cdots(z-a_v)(1-\overline {b_1} z) \cdots(1-\overline{b_s}z).$$ By Lemma 2.1 in \cite{Ge}, one sees that $\frac{\widetilde{g}(z)}{\widetilde{g}(rz)}\leq 2^m$ for $\frac{1}{2}<r<1,|z|\leq 1$. This implies that for $h\in A(\mathbb{D})$ and $\frac{1}{2}<r<1$, we have
\begin{eqnarray*}
\int_{\mathbb{T}} |g(e^{i\theta}) h(r e^{i\theta})|^2 \frac{dm(\theta)}{2\pi}&=&\int_{\mathbb{T}} |\widetilde{g}(e^{i\theta}) h(r e^{i\theta})|^2 \frac{dm(\theta)}{2\pi}
    \leq  2^{2m}\int_{\mathbb{T}} |\widetilde{g}(re^{i\theta}) h(r e^{i\theta})|^2 \frac{dm(\theta)}{2\pi}\\
   &\leq&  2^{2m}\int_{\mathbb{T}} |\widetilde{g}( e^{i\theta}) h(  e^{i\theta})|^2 \frac{dm(\theta)}{2\pi}=2^{2m} \int_{\mathbb{T}} |g(e^{i\theta}) h( e^{i\theta})|^2\frac{dm(\theta)}{2\pi}.
\end{eqnarray*}
Therefore, for $f\in L^2_a(\mu_p)$ and $\frac{1}{2}<r<1,$ one has
 \begin{eqnarray*}
 &&\int_{\mathbb{D}} |p(z) f(r\,z)|^2 \frac{d m(z)}{\pi}
 =\int_{0<r'<1 }\big{[}\int_{\mathbb{T}}|p(r'\, e^{i \theta}) f(r\,r'\, e^{i \theta})|^2\frac{dm(\theta)}{2\pi}\big{]} {2rdr}\\
 &\leq& 2^{2m}\int_{0<r'<1 }\big{[}\int_{\mathbb{T}}|p(r'\, e^{i \theta}) f(  r'\, e^{i \theta})|^2\frac{dm(\theta)}{2\pi}\big{]} {2rdr}=2^{2m}\int_{\mathbb{D}} |p(z) f( z)|^2 \frac{d m(z)}{\pi},
\end{eqnarray*}
which establishes  the inequality   in the case of one dimension. \\Now we prove the general case by a slice argument as in  formula $(\ref{slice})$.
Indeed, we have that
 \begin{eqnarray*}
&& \int_{\mathbb{B}_n} | f_r|^2 |p|^2 dm=\frac{1}{2\pi}\int_{\xi\in\partial \mathbb{B}_n}dm(\xi)\int_{z\in\mathbb{D}}|f(\xi r z)|^2|z^{n-1}p(\xi z)|^2dm(z)\\
 &\leq&\frac{2^{2(m+n-1)}}{2\pi}\int_{\xi\in\partial \mathbb{B}_n}dm(\xi) \int_{z\in\mathbb{D}}|f(\xi   z)|^2| z^{n-1}p(\xi z)|^2dm(z)
= 2^{2(m+n-1)}\int_{\mathbb{B}_n} | f |^2 |p|^2 dm,
 \end{eqnarray*}
 which completes the proof.
\end{proof}
\end{lem}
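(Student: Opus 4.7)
The plan is to reduce the ball inequality to a one-variable inequality on the disk via the slice-integration formula \eqref{slice} already used in Section 3, and then handle the disk case by a Blaschke-type reflection that converts the polynomial weight into one whose dilates are comparable. First I would apply \eqref{slice} to both sides of the asserted inequality: each slice integrand takes the form $|z^{n-1}p(\xi z)|^2 |f(\xi r z)|^2$, which on the single complex variable $z$ is a one-dimensional polynomial weight $|g(z)|^2$ of degree at most $M:=m+n-1$ times $|h(rz)|^2$ for an analytic $h$. It therefore suffices to establish the disk version
\[
\int_{\mathbb{D}}|g(z)|^{2}|h(rz)|^{2}\,dm(z)\;\le\;2^{2M}\int_{\mathbb{D}}|g(z)|^{2}|h(z)|^{2}\,dm(z)
\]
for every polynomial $g\in\mathbb{C}[z]$ of degree $M$, every $h\in A(\mathbb{D})$, and every $r\in(\tfrac12,1)$; specialising to $M=m+n-1$ will yield the announced constant $2^{2(m+n-1)}$.

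For the one-dimensional statement I would exploit the factorisation by root location: write $g(z)=z^{u}\prod_{i}(z-a_{i})\prod_{j}(z-b_{j})$ with $|a_{i}|\ge 1$ and $0<|b_{j}|<1$, and introduce the \emph{reflected polynomial}
\[
\widetilde g(z)=\prod_{i}(z-a_{i})\prod_{j}(1-\overline{b_{j}}z).
\]
On $\mathbb{T}$ one has $|z-b_{j}|=|1-\overline{b_{j}}z|$ and $|z^{u}|=1$, hence $|\widetilde g|=|g|$ on the unit circle. The crucial quantitative input is Lemma~2.1 of \cite{Ge}, which provides the dilation bound $|\widetilde g(z)|/|\widetilde g(rz)|\le 2^{M}$ uniformly for $|z|\le 1$ and $r\in(\tfrac12,1)$; this uses the fact that $\widetilde g$ has no zeros in $\overline{\mathbb{D}}$. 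With it in hand, the circle-level inequality follows from the chain
\begin{eqnarray*}
\int_{\mathbb{T}}|g(e^{i\theta})h(re^{i\theta})|^{2}\tfrac{d\theta}{2\pi}
&=&\int_{\mathbb{T}}|\widetilde g(e^{i\theta})h(re^{i\theta})|^{2}\tfrac{d\theta}{2\pi}
\;\le\;2^{2M}\int_{\mathbb{T}}|\widetilde g(re^{i\theta})h(re^{i\theta})|^{2}\tfrac{d\theta}{2\pi}\\
&\le&2^{2M}\int_{\mathbb{T}}|\widetilde g(e^{i\theta})h(e^{i\theta})|^{2}\tfrac{d\theta}{2\pi}
\;=\;2^{2M}\int_{\mathbb{T}}|g(e^{i\theta})h(e^{i\theta})|^{2}\tfrac{d\theta}{2\pi},
\end{eqnarray*}
where the outer equalities use $|\widetilde g|=|g|$ on $\mathbb{T}$, the first inequality is the dilation bound, and the second is the usual monotonicity of circle means of the subharmonic function $|\widetilde g\,h|^{2}$ (note: this is where the reflection is essential, since $\widetilde g\,h\in A(\mathbb{D})$ whereas $g\,h$ would fail to have this property near the interior zeros of $g$). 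Integrating the resulting pointwise-in-$r$ inequality against $2r\,dr$ over $(0,1)$ delivers the disk version.

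The last step is the lift back to the ball: substituting the disk inequality into the slice representation \eqref{slice} with $g(z)=z^{n-1}p(\xi z)$ (so that $\deg_{z} g\le m+n-1$) and integrating over $\xi\in\partial\mathbb{B}_{n}$ produces exactly the constant $2^{2(m+n-1)}$ claimed. The main obstacle is securing the dilation estimate $|\widetilde g(z)|/|\widetilde g(rz)|\le 2^{M}$, which I would import from \cite[Lemma~2.1]{Ge} rather than re-deriving; it is a genuinely quantitative assertion about products of zero-free factors on the disk that does not follow from soft subharmonic arguments. Once that input is accepted, the remainder is bookkeeping: a Blaschke swap, monotonicity of circle means, Fubini across the slices, and degree counting for $z^{n-1}p(\xi z)$.
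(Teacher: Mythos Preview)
Your proposal is correct and matches the paper's proof essentially line for line: the same Blaschke-type reflection $\widetilde g$, the same dilation bound imported from \cite[Lemma~2.1]{Ge}, the same circle-mean chain followed by radial integration, and the same slice lift via \eqref{slice} applied to $z^{n-1}p(\xi z)$ of degree $m+n-1$. One cosmetic caveat: $\widetilde g$ need not be zero-free on $\overline{\mathbb{D}}$ (factors $z-a_i$ with $|a_i|=1$ may vanish on $\mathbb{T}$), but this does not affect either the Gelca estimate or the subharmonic monotonicity you invoke.
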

\begin{lem}
The weighted Bergman space $L^2_a(\mu_p)$  is complete.
\end{lem}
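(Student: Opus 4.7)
The plan is to identify $L_a^2(\mu_p)$ with (up to a constant factor) an isometric copy of a closed subspace of the standard Bergman space $L_a^2(\mathbb{B}_n)$, reducing completeness of the former to completeness of the latter. The natural embedding is multiplication by $p$: since $\int_{\mathbb{B}_n}|pf|^2\,dv = Vol(\mathbb{B}_n)^{-1}\int_{\mathbb{B}_n}|f|^2|p|^2\,dm$, the map $M_p\colon f\mapsto pf$ is a constant multiple of an isometry from $L_a^2(\mu_p)$ into $L_a^2(\mathbb{B}_n)$, with image exactly $\{h\in L_a^2(\mathbb{B}_n) : h/p \text{ extends to an analytic function on }\mathbb{B}_n\}$.

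Given a Cauchy sequence $\{f_k\}$ in $L_a^2(\mu_p)$, the sequence $\{pf_k\}$ is then Cauchy in $L_a^2(\mathbb{B}_n)$. Since $L_a^2(\mathbb{B}_n)$ is a closed subspace of the complete space $L^2(\mathbb{B}_n)$, it is itself complete, so $pf_k\to h$ in $L_a^2(\mathbb{B}_n)$ for some analytic $h$. The remaining task is to verify that $h/p$ is analytic on all of $\mathbb{B}_n$; once this is established, $f := h/p$ belongs to $L_a^2(\mu_p)$ and by the isometry $\|f_k-f\|_{L^2(\mu_p)} = Vol(\mathbb{B}_n)^{1/2}\|pf_k - h\|_{L_a^2(\mathbb{B}_n)} \to 0$, giving convergence of the Cauchy sequence in $L_a^2(\mu_p)$.

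The core step is therefore that divisibility by $p$ is preserved under the $L_a^2(\mathbb{B}_n)$-limit. Convergence in $L_a^2(\mathbb{B}_n)$ implies uniform convergence on compact subsets of $\mathbb{B}_n$ (by the reproducing kernel estimate, or directly by subharmonicity of $|h|^2$), so it suffices to show divisibility by $p$ is preserved under locally uniform limits. This is a local question. Near any $z_0\in Z(p)$, after a linear change of coordinates, the Weierstrass preparation theorem writes $p = e\cdot P$ on a polydisc, where $e$ is a non-vanishing holomorphic unit and $P$ is a Weierstrass polynomial in $z_1$ of some degree $d$. Weierstrass division then provides a unique decomposition $h = QP + R$ for every holomorphic germ, with $\deg_{z_1} R < d$; divisibility by $p$ is equivalent to $R = 0$, and the explicit Cauchy-integral representation of the remainder map $h\mapsto R(h)$ makes it continuous in the compact-open topology. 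Since $R(pf_k) = 0$ for every $k$ and $pf_k\to h$ locally uniformly, we conclude $R(h) = 0$, so $h/p$ is analytic in a neighborhood of $z_0$. As $z_0\in Z(p)$ is arbitrary (and $h/p$ is trivially analytic off $Z(p)$), $h/p$ extends to an analytic function on $\mathbb{B}_n$, completing the proof.

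The principal obstacle is this closedness-of-divisibility assertion. It is intuitively clear, but in several complex variables it genuinely requires the Weierstrass machinery in order to treat arbitrary reducibility of $p$ and arbitrary vanishing orders along $Z(p)$ in a uniform manner. An alternative, more self-contained route would be to prove directly that every point evaluation $f\mapsto f(z_0)$ is continuous on $L_a^2(\mu_p)$, using Lemma 4.1 (to pass to dilates analytic on $\overline{\mathbb{B}_n}$) together with a Jensen-type bound arising from plurisubharmonicity of $\log|p|^2$; however, the isometric-embedding approach above appears to give the cleanest proof.
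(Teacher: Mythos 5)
Your proof is correct, and it takes a genuinely different route from the paper's. The paper does not pass through Weierstrass theory at all: it works directly with point evaluations on $L^2_a(\mu_p)$, picking a multi-index $\alpha$ with $|\alpha|=\deg(p)$ and $\partial^\alpha p$ a nonzero constant, then combining the dilation bound (Lemma 4.1) with an iterated application of Proposition 2.2(3) to show that for fixed $r\in(\tfrac12,1)$ the dilated sequence $f_k(r\,\cdot)$ is Cauchy in a weighted Bergman space $L^2_{a,2|\alpha|}(\mathbb{B}_n)$; continuity of point evaluations there forces $f_k$ to converge locally uniformly to some analytic $g$, which must agree with the $L^2(\mu_p)$-limit a.e.\ off $Z(p)$. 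Your argument instead pushes forward by $M_p$ into the unweighted Bergman space and reduces everything to the closedness, under locally uniform limits, of the sheaf-theoretic condition ``divisible by $p$'', proved locally via Weierstrass preparation and the Cauchy-integral representation of the Weierstrass division remainder. Both proofs are sound. What each buys: your route is conceptually cleaner and independent of the covering/derivative machinery of Section 3, at the cost of importing the Weierstrass apparatus; the paper's route is quantitatively explicit and, importantly, produces as a byproduct the boundedness of point evaluations on $L^2_a(\mu_p)$, which the paper then reuses verbatim in the proof of Lemma 4.3 (density of polynomials). If you adopt your approach, you should note that bounded point evaluations still follow (from locally uniform convergence of $pf_k$, boundedness of evaluations on $L^2_a(\mathbb{B}_n)$, and continuity of the Weierstrass remainder), since that fact is needed downstream. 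One small presentational point: you should also observe that $p\not\equiv 0$ guarantees $Z(p)$ has measure zero, so ``$f=g$ a.e.'' indeed identifies the $L^2(\mu_p)$-limit with the analytic function $h/p$; this is implicit in your write-up but worth stating, as it is the step that closes the loop.
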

\begin{proof}
It suffices to show that $L^2_a(\mu_p)$  is a closed subspace of $L^2(\mu_p)$. That is, if a sequence $\{f_n\}$ in $L^2_a(\mu_p)$ converges to $f$  in the norm of $L^2(\mu_p)$,
 then $f$ is   equal a.e. to an analytic function on the unit ball. Choose a multi-index $\alpha $ such that $|\alpha|=\deg(p)$ and $\partial^\alpha p$ is a nonzero constant. Using the above lemma and Proposition $2.2(3)$, we have for any $\frac{1}{2}<r<1$, that
\begin{eqnarray*}
&&|\partial^\alpha p|^2 \int_{\mathbb{B}_n} |f_n(r\, z)-f_l(r\,z)|^2 (1-|z|^2)^{2|\alpha|}dm(z) \\ &\leq&
 c_{2|\alpha|}\,\,  {\textstyle\prod_{m=1}^{|\alpha|} C(n,m)^{1+|\alpha|-m}}\int_{\mathbb{B}_n} |f_n(r\, z)-f_l(r\,z)|^2|p(z)|^2 dm(z)\\&\leq& 2^{2(|\alpha|+n-1)} c_{2|\alpha|} \,\, {\textstyle\prod_{m=1}^{|\alpha|} C(n,m)^{1+|\alpha|-m}}\int_{\mathbb{B}_n} |f_n( z)-f_l(z)|^2|p(z)|^2 dm(z) \rightarrow 0
\end{eqnarray*}
   as  $n,l\to \infty$, where $ C(n,m)$ is the constant appearing in Proposition $2.2$. This implies that the sequence $ f_n  $ is pointwise convergent to an analytic function $g$. Noticing that $f_n$ is also pointwise convergent to $f$ outside the zero measure set  $Z(p)\cap \mathbb{B}_n$,   we have $f=g\,a.e.$, which completes the proof.
\end{proof}
\begin{lem}
The polynomial ring $ \mathbb{C}[z_1,\cdots,z_n]$ is dense in $L^2_a(\mu_p)$.
\end{lem}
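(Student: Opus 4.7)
The plan is to approximate a given $f \in L^2_a(\mu_p)$ in two stages: first by a dilate $f_r(z) = f(rz)$ with $r$ close to $1$, and then by the Taylor polynomials of $f_r$ at the origin. The second stage is routine: $f_r$ is analytic on an open neighborhood of $\overline{\mathbb{B}_n}$, so its Taylor series at the origin converges uniformly on $\overline{\mathbb{B}_n}$, and because $|p|^2$ is bounded on $\mathbb{B}_n$, uniform convergence yields convergence in $L^2(\mu_p)$. Hence the main task is to show that $\|f_r - f\|_{\mu_p} \to 0$ as $r \to 1^-$.

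For this I would split the integral at a radius $\rho \in (0, 1)$. On the compact set $\{|z| \leq \rho\}$ the convergence $f_r \to f$ is uniform by standard uniform continuity of $f$ on a slightly larger compact subset of $\mathbb{B}_n$, so the portion of the integral over $\{|z| \leq \rho\}$ tends to $0$ as $r \to 1^-$ for each fixed $\rho$. On the shell $\{\rho < |z| < 1\}$, the triangle inequality bounds the integrand by $2|f_r|^2|p|^2 + 2|f|^2|p|^2$, and the $|f|^2$ piece is small uniformly in $r$ provided $\rho$ is close enough to $1$, by absolute continuity of the finite measure $|f|^2|p|^2 \, dm$. The remaining task is to control $\int_{\rho < |z| < 1} |f_r|^2 |p|^2 \, dm$ by a constant multiple of $\int_{\rho < |z| < 1} |f|^2 |p|^2 \, dm$, uniformly in $r \in (1/2, 1)$.

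This last control is an annular refinement of Lemma 4.1 and is the key step. It follows by inspecting the proof of Lemma 4.1: the one-variable inequality drawn from \cite{Ge} is applied circle-by-circle at each radius $r' \in (0, 1)$ independently, so restricting the radial integration from $[0, 1]$ to $[\rho, 1]$ carries through and produces
$$\int_{\rho < |z| < 1} |f_r|^2 |p|^2 \, dm \leq 2^{2(m+n-1)} \int_{\rho < |z| < 1} |f|^2 |p|^2 \, dm,$$
the passage from one variable to $n$ variables being the same slice argument as in Lemma 4.1. Given $\varepsilon > 0$, first choosing $\rho$ close to $1$ so the annular piece is below $\varepsilon$ and then $r$ close to $1$ so the interior piece is below $\varepsilon$ yields $\|f_r - f\|_{\mu_p}^2 < 2\varepsilon$; combining this with uniform Taylor polynomial approximation of each $f_r$ via the triangle inequality establishes density of $\mathbb{C}[z_1,\ldots,z_n]$ in $L^2_a(\mu_p)$. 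The principal obstacle is recognizing and justifying the annular version of Lemma 4.1, but once that is in hand the rest is a standard two-step approximation.
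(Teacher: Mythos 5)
Your argument is correct, but it takes a genuinely different route from the paper. The paper's proof is soft: it uses Lemma 4.1 only to conclude that the dilates $f_n(z) = f((1-\tfrac1n)z)$ form a bounded sequence in $L^2_a(\mu_p)$, extracts a weakly convergent subsequence, observes each $f_n$ lies in $A(\mathbb{B}_n)\subset M$ so the weak limit $g$ is in the (norm-closed, hence weakly closed) subspace $M$, and then identifies $g=f$ via the bounded point evaluations already obtained in the proof of Lemma 4.2. You instead prove the stronger fact that $\|f_r - f\|_{\mu_p}\to 0$ directly, by splitting the ball at a radius $\rho$: the inner piece is handled by local uniform convergence, the outer $|f|^2|p|^2$ piece by absolute continuity of the finite measure, and the outer $|f_r|^2|p|^2$ piece by an annular sharpening of Lemma 4.1. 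Your claim that Lemma 4.1 localizes to the shell $\{\rho<|z|<1\}$ does hold: in the paper's proof the Gelca estimate and the monotonicity of circle means for $\widetilde{g}h$ are applied separately on each circle $|z|=r'$, and the $n$-dimensional slice decomposition preserves radii, so restricting the $r'$-integration to $[\rho,1)$ is legitimate and yields the annular inequality with the same constant $2^{2(m+n-1)}$. What your approach buys is a constructive proof with a quantitative norm estimate and no appeal to weak compactness; the price is having to recognize and verify the annular refinement, whereas the paper reuses Lemma 4.1 as a black box together with facts already in hand from Lemma 4.2.
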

\begin{proof}
Let $M$ be the closure of $ \mathbb{C}[z_1,\cdots,z_n]$ in $L^2_a(\mu_p)$. Obviously, for each $g\in A(\mathbb{B}_n)$, we have $g\in M$. For any $f\in L^2_a(\mu_p)$,  set $f_n(z)=f((1-\frac{1}{n})z)$.
By Lemma 4.1 the sequence  $ {f_n } $ is   uniformly bounded  in $L^2_a(\mu_p)$. So, there exists a subsequence $f_{n_k}$ which is weakly convergent to
some function $g\in L^2_a(\mu_p)$. Clearly $g\in M$. Moreover, for each $z\in \mathbb{B}_n$, by the proof for the above lemma, the point evaluation at $z$ is a
bounded functional in the Hilbert space  $L^2_a(\mu_p)$. This implies that $f_{n_k}(z)\to g(z)$ for each $z\in \mathbb{B}_n$. Thus, $g=f$ and hence $f\in M$.
This means that the closure $M=L^2_a(\mu_p)$.
\end{proof}
\vskip2mm
We summarize the results in this subsection in the following.
\begin{thm}
Let  $p\in \mathbb{C}[z_1,\cdots,z_n]$. Set $d\mu_p=|p|^2 dm$ and $$L_a^2(\mu_p)=\{f\in L^2(\mu_p),\text{ f holomorphic on  }\mathbb{B}_n\}.$$ Then
$L_a^2(\mu_p)$ is a reproducing kernel Hilbert space on $\mathbb{B}_n$, which defines a  $p$-essentially normal Hilbert module whose essential spectrum
equals   $\partial \mathbb{B}_n$. Moreover, $ \mathbb{C}[z_1,\cdots,z_n]$ is dense in $L^2_a(\mu_p)$. And $L_a^2(\mu_p)\subset L_{a,t}^2(\mathbb{B}_n)$ for $t\geq 2 \deg(p).$
\end{thm}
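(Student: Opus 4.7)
The plan is to reduce each assertion of the theorem to a property of the principal submodule $[p]\subseteq L^2_a(\mathbb{B}_n)$ via the isometry $U\colon L^2_a(\mu_p)\to L^2_a(\mathbb{B}_n)$, $Uf=pf$. By Lemma 4.3 its range contains the dense subset $p\cdot\mathbb{C}[z_1,\cdots,z_n]$ of $[p]$, so, since an isometry has closed range, $U$ is unitary onto $[p]$ and intertwines $M_{z_i}$ with $S_{z_i}$. Density of polynomials in $L^2_a(\mu_p)$ is precisely Lemma 4.3, and $q$-essential normality for $q>n$ follows at once from Theorem 2.5 applied to $[p]$ together with conjugation by $U$.

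For the reproducing-kernel and weighted-inclusion statements, choose a multi-index $\alpha$ with $|\alpha|=\deg(p)$ such that $\partial^\alpha p$ is a nonzero constant. Iterating Proposition 2.2(3) a total of $|\alpha|$ times along the coordinates of $\alpha$, exactly as in the proof of Lemma 4.2, yields a constant $C_p$ with
\[
\|f\|_{2\deg(p)}^2\le C_p\,\|pf\|_0^2
\]
for every analytic polynomial $f$, and hence for every $f\in L^2_a(\mu_p)$ by density. This gives the continuous embedding $L^2_a(\mu_p)\hookrightarrow L^2_{a,2\deg(p)}(\mathbb{B}_n)$, from which bounded point evaluation at interior points, and hence the reproducing kernel property, follows. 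The elementary monotonicity $\|f\|_t\le\|f\|_s$ for $s\le t$, checked on monomials from the explicit formula for $\|z^\alpha\|_t$, then upgrades the embedding to $L^2_a(\mu_p)\subseteq L^2_{a,t}(\mathbb{B}_n)$ for every $t\ge 2\deg(p)$.

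The essential spectrum assertion splits into two inclusions. For $\sigma_{\mathrm{ess}}\subseteq\partial\mathbb{B}_n$, the monomial computation $\sum_{i=1}^n M_{z_i}^\ast M_{z_i}=\tfrac{N+n}{N+n+1}$ on $L^2_a(\mathbb{B}_n)$ shows this sum equals $I$ modulo compacts; invariance of $[p]$ under the $M_{z_i}$ gives $S_{z_i}^\ast=PM_{z_i}^\ast$, so compression yields $\sum S_{z_i}^\ast S_{z_i}\equiv I$ modulo compacts on $[p]$, which in the commutative image in the Calkin algebra forces $1-\sum|\lambda_i|^2$ to vanish identically on the maximal ideal space. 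For the reverse inclusion, the reproducing kernels $K^p_w$ of $L^2_a(\mu_p)$ satisfy $M_{z_i}^\ast K^p_w=\overline{w_i}K^p_w$, while the continuous inclusion $L^2_a(\mathbb{B}_n)\subseteq L^2_a(\mu_p)$ (from boundedness of $|p|$ on $\mathbb{B}_n$) yields, upon testing against the Bergman kernel, the comparison $\|K^{\mathrm{Berg}}_w\|\le C\|K^p_w\|$; hence $\|K^p_w\|\to\infty$ as $w\to\zeta\in\partial\mathbb{B}_n$, so the normalized kernels $\widehat K^p_w$ converge weakly to $0$ and satisfy $\|(M_{z_i}^\ast-\overline{\zeta_i})\widehat K^p_w\|\to 0$ for each $i$, placing $\zeta\in\sigma_{\mathrm{ess}}$.

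I expect the main obstacle to be the uniform blow-up of $\|K^p_w\|$ at every point of $\partial\mathbb{B}_n$, including those in $Z(p)\cap\partial\mathbb{B}_n$. The Bergman-kernel comparison above bypasses a case analysis on $Z(p)$ and supplies this blow-up throughout the boundary, so that the approximate joint eigenvector argument applies uniformly and yields the full essential spectrum $\partial\mathbb{B}_n$.
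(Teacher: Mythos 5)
Your proof is correct and, for the core claim, takes the paper's route: the isometry $f\mapsto pf$ onto $[p]$, with Lemma 4.3 giving $\mathrm{ran}(\mathcal I)=[p]$, and then Theorem 2.5. But you also supply full arguments for the parts the paper only asserts. The paper's proof of Theorem 4.4 only spells out the essential normality; the reproducing kernel property and the inclusion $L^2_a(\mu_p)\subset L^2_{a,t}$ are left to the reader (implicitly via Lemma 4.2's iterated use of Proposition 2.2(3) and Lemma 4.1), and the essential spectrum claim is not argued at all. Your treatment of the embedding $L^2_a(\mu_p)\hookrightarrow L^2_{a,2\deg p}$, together with the monomial-monotonicity upgrade to $t\ge 2\deg p$, is exactly the right way to make those parts precise. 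Your essential-spectrum argument is also sound and genuinely adds content: the $\sum S_{z_i}^\ast S_{z_i}\equiv I$ modulo compacts computation gives $\sigma_{\mathrm{ess}}\subseteq\partial\mathbb B_n$, and the Bergman-kernel comparison via the adjoint of the bounded inclusion $L^2_a(\mathbb B_n)\hookrightarrow L^2_a(\mu_p)$ is a clean way to get $\|K^p_w\|\to\infty$ at every boundary point, bypassing any case analysis on $Z(p)\cap\partial\mathbb B_n$; from there the normalized kernels give approximate joint eigenvectors of the $M_{z_i}^\ast$, and essential normality converts that into membership in $\sigma_{\mathrm{ess}}$. One small point worth making explicit in a final write-up: weak convergence of $\widehat K^p_w$ to $0$ uses density of polynomials (or bounded functions) together with $\|K^p_w\|\to\infty$, which you implicitly invoke; and the step from approximate eigenvectors of $M_{z_i}^\ast-\overline{\zeta_i}$ to $\zeta\in\sigma_{\mathrm{ess}}$ should say a word about essential normality making $\sum(M_{z_i}-\zeta_i)(M_{z_i}-\zeta_i)^\ast$ and $\sum(M_{z_i}-\zeta_i)^\ast(M_{z_i}-\zeta_i)$ simultaneously non-Fredholm, as in the paper's Proposition 4.6.
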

\begin{proof} Consider the operator $ \mathcal {I}:L^2_a(\mu_p) \to L^2_a(\mathbb{B}_n)$  defined by
$$\mathcal {I} (p)=pf. $$
This natural embedding map $\mathcal {I}$ is an isometrical  module isomorphism from $L^2_a(\mu_p)$ to the image $   \mathrm{ran}(\mathcal{I})$. Clearly the submodule $[p]\subseteq  \mathrm{ran}(\mathcal{I}).$ Furthermore, by Lemma 4.3, each of $[p]$ and  $\mathrm{ran}(\mathcal{I}) $ is the closure of the ideal   $p\mathbb{C}[z_1,\cdots,z_n]$. This means that $[p]= \mathrm{ran}(\mathcal{I}).$ Hence, by Theorem 2.5 one sees that $L^2_a(\mu_p)$ is essentially normal, which is  a result  analogous to the basic result for the  Bergman space.
\end{proof}

Moreover, we also have  obtained a somewhat surprising  result in function theory since   $[p]= \mathrm{ran}(\mathcal{I})$.
 \begin{cor}  For any analytic function $f\in L^2_a(\mathbb{B}_n)$, one has that $f\in [p]$ if and only if $f=ph$ for  some analytic function $h$ on $\mathbb{B}_n$.\end{cor}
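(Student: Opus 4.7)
The corollary is essentially a restatement of the identification $[p] = \mathrm{ran}(\mathcal{I})$ already obtained in Theorem 4.4, so my plan is to exploit this identification in both directions, using the isometric module isomorphism $\mathcal{I}: L^2_a(\mu_p) \to L^2_a(\mathbb{B}_n)$ given by $\mathcal{I}(h) = ph$.

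For the easy direction, suppose $f = ph$ for some analytic $h$ on $\mathbb{B}_n$ with $f \in L^2_a(\mathbb{B}_n)$. I will first observe that this forces $h \in L^2_a(\mu_p)$, since
$$\int_{\mathbb{B}_n} |h|^2 \, d\mu_p = \int_{\mathbb{B}_n} |ph|^2 \, dm = \|f\|^2 < \infty.$$
Then, invoking Lemma 4.3, I would choose polynomials $q_k$ with $q_k \to h$ in the $L^2_a(\mu_p)$-norm, and the isometry property of $\mathcal{I}$ yields $pq_k \to ph = f$ in $L^2_a(\mathbb{B}_n)$. Since each $pq_k$ lies in the polynomial ideal $p\,\mathbb{C}[z_1,\dots,z_n]$, the limit $f$ lies in its Bergman-space closure $[p]$.

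For the converse, if $f \in [p] = \mathrm{ran}(\mathcal{I})$, I would write $f = \mathcal{I}(h) = ph$ for some $h \in L^2_a(\mu_p)$. By Lemma 4.2, together with the bounded point-evaluation estimate established in its proof, every element of $L^2_a(\mu_p)$ is a genuine holomorphic function on $\mathbb{B}_n$, so $h$ is the desired analytic factor.

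I do not anticipate a serious obstacle, as both directions are immediate from Theorem 4.4 together with Lemmas 4.2 and 4.3. The one mild subtlety worth flagging is that the factorization $f = ph$ must hold as an equality of holomorphic functions on all of $\mathbb{B}_n$, including the zero variety $Z(p)$; this is automatic once $h$ is known a priori to lie in $L^2_a(\mu_p)$, since membership there entails holomorphy on the entire ball rather than merely on $\mathbb{B}_n \setminus Z(p)$, so no separate removable-singularity argument is required.
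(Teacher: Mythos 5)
Your proof is correct and follows exactly the paper's line of reasoning: the corollary is a direct restatement of the identification $[p]=\mathrm{ran}(\mathcal{I})$ established in the proof of Theorem~4.4, and your unpacking of both directions via Lemmas~4.2 and~4.3 and the isometry $\mathcal{I}$ is precisely what that identification encodes. One minor remark: in the converse direction you invoke Lemma~4.2 to conclude $h$ is holomorphic, but this is already part of the definition of $L^2_a(\mu_p)$ (analytic functions in $L^2(\mu_p)$); Lemma~4.2's real role is to guarantee completeness of $L^2_a(\mu_p)$, hence closedness of $\mathrm{ran}(\mathcal{I})$, which is what makes $[p]=\mathrm{ran}(\mathcal{I})$ possible in the first place.
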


   \subsection{Quotient Modules}
       Let $p\in\mathbb{C}[z_1,\cdots,z_n],\mathcal{M}_p=[p]\subseteq L_a^2(\mathbb{B}_n)$ be the cyclic submodule generated by $p$, $\mathscr{Q}_p$
       be the quotient module defined by the short exact sequence
       $$ 0\longrightarrow \mathcal{M}_p \longrightarrow L_a^2(\mathbb{B}_n)\longrightarrow \mathscr{Q}_p\longrightarrow 0,$$
       and $Q_f$ be the compression of $M_f$ on $L_a^2(\mathbb{B}_n)$ to $\mathscr{Q}_p$ for $f\in H^\infty(\mathbb{B}_n).$ Then the map $f\to Q_f$ for $f\in\mathbb{C}[z_1,\cdots,z_n]$ defines the module  action of $\mathbb{C}[z_1,\cdots,z_n]$ on $\mathscr{Q}_p$.

        Let $\mathscr{T}(\mathscr{Q}_p)$ be the $C^*$-subalgebra of $\mathscr{L}(\mathscr{Q}_p)$ generated by $\{Q_f:f\in\mathbb{C}[z_1,\cdots,z_n]\}$ and $ \mathcal {K}(\mathscr{Q}_p)$ be the ideal of compact operators on $\mathscr{Q}_p$.        From Theorem 2.5 and Lemma 2.1 in \cite{GW} or the related result in \cite{Ar2,Dou1,Guo,GWk2},  it follows that all the operators $Q_f$ are essentially normal, or $[Q_f,Q_g^\ast]\in \mathcal {K}(\mathscr{Q}_p)$ for $f,g\in
          \mathbb{C}[z_1,\cdots,z_n]$, and hence $\mathscr{T}(\mathscr{Q}_p)/ \mathcal {K}(\mathscr{Q}_p)$ is a commutative $C^\ast$-algebra. This means
          that it's isometrically isomorphic to $C(X_p)$ for some compact metrizable  space $X_p$. Using the image of the $n$-tuple $(Q_{z_1},\cdots,Q_{z_n})$ in $\mathscr{T}(\mathscr{Q}_p)/ \mathcal {K}(\mathscr{Q}_p) $, we can identify $X_p$ as  a subset of $\mathbb{C}^n$. Moreover, since $\sum_{i=1}^n Q_{z_i}^\ast Q_{z_i}\leq I$, one sees
          that $X_p\subseteq \, clos\,\mathbb{B}_n$. In fact, we have the following partial characterization of $X_p$.

          \begin{prop}
          For $p\in\mathbb{C}[z_1,\cdots,z_n]$, we have
          $$clos \{Z(p)\cap \mathbb{B}_n\}\cap \partial \mathbb{B}_n \subseteq X_p\subseteq Z(p)\cap \partial \mathbb{B}_n .$$
          \end{prop}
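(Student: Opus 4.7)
The plan is to verify the two inclusions separately. For the upper bound $X_p \subseteq Z(p) \cap \partial\mathbb{B}_n$, the key observation is that $M_p$ carries $L_a^2(\mathbb{B}_n)$ into $\mathcal{M}_p = [p]$, so $Q_p = 0$. Since $\mathscr{T}(\mathscr{Q}_p)/\mathcal{K}(\mathscr{Q}_p)$ is commutative by essential normality (Theorem 2.5), a reordering modulo compacts yields $Q_p \equiv p(Q_{z_1},\ldots,Q_{z_n})\pmod{\mathcal{K}(\mathscr{Q}_p)}$, each step using that $P_{\mathscr{Q}_p}M_{z_i}P_{\mathcal{M}_p} = P_{\mathscr{Q}_p}[M_{z_i},P_{\mathcal{M}_p}]$ is compact (standard submodule reformulation of essential normality of $\mathcal{M}_p$). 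The Gelfand transform then gives $p|_{X_p}\equiv 0$, hence $X_p\subseteq Z(p)$. To sharpen the automatic inclusion $X_p\subseteq\overline{\mathbb{B}_n}$ (from $\sum Q_{z_i}^*Q_{z_i}\le I$) into $X_p\subseteq\partial\mathbb{B}_n$, I will show that $I-\sum_iQ_{z_i}^*Q_{z_i}$ is compact on $\mathscr{Q}_p$. A basis computation yields $(I-\sum_iM_{z_i}^*M_{z_i})z^\alpha = \frac{1}{n+|\alpha|+1}z^\alpha$, which is compact on $L_a^2$; setting $X_i := P_{\mathcal{M}_p}M_{z_i}P_{\mathscr{Q}_p} = -[M_{z_i},P_{\mathcal{M}_p}]P_{\mathscr{Q}_p}$, the block-triangular structure of each $M_{z_i}$ produces
\[
I - \sum_{i=1}^{n} Q_{z_i}^{*}Q_{z_i}
\;=\; P_{\mathscr{Q}_p}\Bigl(I-\sum_{i=1}^{n} M_{z_i}^{*}M_{z_i}\Bigr)P_{\mathscr{Q}_p} \;+\; \sum_{i=1}^{n} X_i^{*}X_i.
\]
Both summands are compact, the second by the Schatten-class control of $[M_{z_i},P_{\mathcal{M}_p}]$ furnished by Theorem 2.5.

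For the lower inclusion, fix $\lambda\in\partial\mathbb{B}_n$ and choose a sequence $w_k\in Z(p)\cap\mathbb{B}_n$ with $w_k\to\lambda$. Let $k_w(z)=(1-\langle z,w\rangle)^{-(n+1)}$ be the Bergman reproducing kernel and $\hat k_w = k_w/\|k_w\|$. The identity $\langle pf,k_{w_k}\rangle = p(w_k)f(w_k)=0$ for every polynomial $f$ forces $\hat k_{w_k}\in\mathscr{Q}_p$, while $M_{z_i}^{*}k_w = \overline{w_i}\,k_w$ yields $Q_{z_i}^{*}\hat k_{w_k} = \overline{(w_k)_i}\,\hat k_{w_k}$. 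Because $\|k_w\|^2 = (1-|w|^2)^{-(n+1)}\to\infty$ as $w_k\to\lambda$, the normalized kernels tend weakly to $0$, so the vector states $\chi_k(T) := \langle T\hat k_{w_k},\hat k_{w_k}\rangle$ on $\mathscr{T}(\mathscr{Q}_p)$ annihilate $\mathcal{K}(\mathscr{Q}_p)$ in any weak-$*$ limit, and therefore descend to states on $C(X_p)$. Essential normality permits rewriting every word in $\{Q_{z_i},Q_{z_j}^{*}\}$ modulo compacts as a normally ordered product $Q_f^{*}Q_g$, and the joint eigenvalue equation gives $\chi_k(Q_f^{*}Q_g) \to \overline{f(\lambda)}\,g(\lambda)$. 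Since such mixed monomials span a dense subalgebra of $C(X_p)$, the limit state is exactly point evaluation at $\lambda$, forcing $\lambda\in X_p$.

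The chief obstacle is the compactness of $\sum_i X_i^{*}X_i$ used in the upper bound: this does not follow from essential normality of $\mathscr{Q}_p$ alone, but from the sharper fact that each $[M_{z_i},P_{\mathcal{M}_p}]$ lies in a Schatten class, which is the standard submodule reformulation of Theorem 2.5. The lower bound is more direct, its only subtlety being the multiplicativity of the weak-$*$ limit state; essential normality already makes the quotient algebra commutative, so density of polynomials in $Q_{z_i}$ and $Q_{z_j}^{*}$ suffices to pin the limit down as evaluation at $\lambda$.
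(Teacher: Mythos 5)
Your proof is correct, but it takes a genuinely different route from the paper's in both inclusions, so a comparison is worth spelling out.

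For the upper bound, the paper splits the work differently. It first shows $X_p\subseteq\partial\mathbb{B}_n$ by noting that the block decomposition $M_f = S_f\oplus Q_f + K$ (with $K$ compact by essential normality of $\mathcal{M}_p$) induces a $*$-homomorphism from $\mathscr{T}(L_a^2)/\mathcal{K}\cong C(\partial\mathbb{B}_n)$ onto the Calkin image of $\mathscr{T}(\mathscr{Q}_p)$, so $X_p$ sits inside the joint essential spectrum $\partial\mathbb{B}_n$. It then kills off points with $p(z_0)\ne 0$ by a Nullstellensatz argument: since $\{z_1-z_1^0,\dots,z_n-z_n^0,p\}$ generates the unit ideal, one gets $\sum_i Q_{q_i}Q_{z_i-z_i^0} = I$ exactly (using $Q_p=0$), so $z_0$ is excluded from the joint spectrum of the compressions. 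Your Gelfand-transform route is cleaner here and avoids the Nullstellensatz; note also that you can drop the ``reordering modulo compacts'' since $p$ is analytic, so $Q_p = p(Q_{z_1},\dots,Q_{z_n})$ holds \emph{exactly} for compressions to a co-invariant subspace. Your compactness computation for $I-\sum Q_{z_i}^*Q_{z_i}$ is likewise a self-contained substitute for the paper's invocation of the essential spectrum, correctly using the block identity $Q_{z_i}^*Q_{z_i} = P_{\mathscr{Q}_p}M_{z_i}^*M_{z_i}P_{\mathscr{Q}_p} - X_i^*X_i$ and the Schatten-class bound on $X_i = -[M_{z_i},P_{\mathcal{M}_p}]P_{\mathscr{Q}_p}$ from Theorem 2.5.

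For the lower bound, the paper argues by contradiction: if $w_0\notin X_p$, then $(Q_{z_1-w_1^0},\dots,Q_{z_n-w_n^0})$ is Fredholm, so $H=\sum Q_{z_i-w_i^0}Q_{z_i-w_i^0}^*$ has finite-codimensional range and $H+E>\varepsilon I$ for some finite-rank $E$; but the joint eigenvectors $\xi_k=\hat k_{w_k}$ give $\langle(H+E)\xi_k,\xi_k\rangle\to 0$, a contradiction. Your approach constructs the character at $\lambda$ directly as a weak-$*$ limit of the vector states $\chi_k$, verifying multiplicativity on the dense $*$-subalgebra spanned by the normally ordered monomials $Q_f^*Q_g$ (one does need to commute $Q_f^*$ past $Q_g$ modulo compacts to reduce $\chi_k(Q_f^*Q_g)$ to $\overline{f(w_k)}g(w_k)$, which you implicitly rely on). Both are standard; the paper's Fredholm argument is slightly more economical while your state-theoretic one is arguably more transparent in exhibiting exactly which character of $C(X_p)$ witnesses $\lambda\in X_p$.
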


          Note that a point $z_0$ is in $Z(p)\cap \partial \mathbb{B}_n$  and not in $ clos \{Z(p)\cap \mathbb{B}_n\}$ only when
          the component of $Z(p)$ containing $z_0$ is "tangent" to $\mathbb{B}_n$ in some sense.

          \begin{proof}
          For $f\in\mathbb{C}[z_1,\cdots,z_n]$, we can write
          $$M_f=S_f\oplus Q_f +K,$$
          where $K\in \mathcal{K}(L_a^2(\mathbb{B}_n))$. Since the $C^*-$ algebra generated by $\{M_f:f\in\mathbb{C}[z_1,\cdots,z_n]\}$ contains
          $\mathcal{K}(L_a^2(\mathbb{B}_n))$ and $\mathscr{T}(L_a^2(\mathbb{B}_n))/ \mathcal {K}(L_a^2(\mathbb{B}_n))\cong C(\partial \mathbb{B}_n)$, we have a $*-$ homomorphism from $C(\partial \mathbb{B}_n) $ to $C(X_p)$.  It follows that $$X_p\subseteq\sigma_e\{M_{z_1},\cdots,M_{z_n}\} = \partial \mathbb{B}_n,$$
          where $\sigma_e$ denotes the joint essential spectrum.

          If $z_0=(z^0_1,\cdots,z^0_n)\in\partial\mathbb{B}_n$ such that $p(z_0)\neq 0$, then the ideal in $\mathbb{C}[z_1,\cdots,z_n]$ generated by $\{z_1-z^{0}_1,\cdots, z_n-z_n^0,p\}$ equals   $\mathbb{C}[z_1,\cdots,z_n]$. Therefore, there exist polynomials $\{q_i\}_{i=1}^{n+1}$ such
          that $$\sum_{i=1}^n q_i(z)(z_i-z_i^0)+ q_{n+1}(z)p(z)\equiv 1.$$

        This implies that $\sum_{i=1}^n Q_{q_i}Q_{z_i-z^0_i}=I_{\mathscr{Q}_p}$, or $z_0$ is not in the joint essential spectrum of the $n$-tuple $\{Q_{z_1},\cdots,Q_{z_n}\}$
        and $z_0\notin X_p$.

        Suppose $w_0=(w_1^0,\cdots,w_n^0)\in\partial \mathbb{B}_n$ such that there exists $\{w_k\}_{k=1}^\infty \subseteq  Z(p)\cap \mathbb{B}_n$ and $w_k\rightarrow w_0$. Let $\{\xi_k\}$
        be   unit vectors in $L_a^2(\mathbb{B}_n)$ such that $M^\ast_f \xi_k=\overline{f(w_k)}\xi_k$ for $f\in\mathbb{C}[z_1,\cdots,z_n]$ and $k\in\mathbb{N}$. It's well known that
        $\xi_k$ is weakly convergent  to $0$ since $\overline{w_0}$ is not a joint eigenvalue of the $n-$tuple $(M_{z_1}^\ast,\cdots,M_{z_n}^\ast)$.  Since
        $$\langle \xi_k, pf\rangle_{L_a^2(\mathbb{B}_n)}=\langle M^\ast_p\xi_k,  f\rangle_{L_a^2(\mathbb{B}_n)}=\overline{p(\xi_k)}\langle \xi_k, f\rangle_{L_a^2(\mathbb{B}_n)}=0, \forall f\in L_a^2(\mathbb{B}_n)$$
        we have $\xi_k \perp [p]$ and hence $\{\xi_k\}\subseteq \mathscr{Q}_p$. Moreover, $Q^\ast_f\xi_k=M^\ast_f\xi_k=\overline{f(\xi_k)}\xi_k$ for $k\in\mathbb{N},f\in \mathbb{C}[z_1,\cdots,z_n]$.

         Now we claim that such $w_0=(w_1^0,\cdots,w_n^0) \in X_p$. Otherwise,  the $n$-tuple of operators $ (Q_{z_1-w^0_1},\cdots,Q_{z_n-w^0_n}) $ is Fredholm and hence
         the range of $H=\sum_{i=1}^n Q_{z_i-w^0_i}Q_{z_i-w^0_i}^\ast$ has finite codimension  in $\mathscr{Q}_p$. Thus  there exists a finite rank projection $E$ and $\varepsilon>0$
         such that $H+E>\varepsilon I_{\mathscr{Q}_p}$. However, a direct computation shows that
         $$\langle(H+E)\xi_k,\,\xi_k\rangle=\sum_{i=1}^n |w_i^k-w_i^0|^2+ \langle   E \,\xi_k,\,\xi_k\rangle\rightarrow 0,$$
         since $E$ is a finite rank operator and $\xi_k\to 0$ weakly.  This leads to a contradiction. Therefore, we have $w_0  \in X_p$, completing the proof of the proposition.\end{proof}

         In many cases, the two sets are equal and thus $X_p$ is characterized completely.

         Recall that $f\in\mathbb{C}[z_1,\cdots,z_n]$ is said to be quasi-homogeneous if there exists $k_1,\cdots,k_n\in\mathbb{N}$ and a homogeneous polynomial $g\in\mathbb{C}[z_1,\cdots,z_n]$ such that $f(z_1,\cdots,z_n)=g(z_1^{k_1},\cdots,z_n^{k_n})$ for $(z_1,\cdots,z_n)\in\mathbb{C}^n$.

         \begin{cor}
         For a quasi-homogeneous polynomial $p\in\mathbb{C}[z_1,\cdots,z_n]$, we have $X_p=Z(p)\cap \partial \mathbb{B}_n$.
         \end{cor}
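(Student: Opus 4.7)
The goal is to strengthen Proposition 4.5 in the quasi-homogeneous case by showing $Z(p)\cap \partial\mathbb{B}_n \subseteq \operatorname{clos}\{Z(p)\cap \mathbb{B}_n\}$, because then the two bracketing inclusions in Proposition 4.5 collapse to an equality. So my plan is to take an arbitrary $w_0 \in Z(p)\cap \partial\mathbb{B}_n$ and manufacture a sequence $\{w_k\} \subset Z(p)\cap \mathbb{B}_n$ converging to $w_0$, using the anisotropic scaling symmetry that quasi-homogeneity provides.

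First I would unpack the definition: write $p(z_1,\ldots,z_n) = g(z_1^{k_1},\ldots,z_n^{k_n})$ with $g$ homogeneous of some degree $d$. The crucial observation is that $p$ is then homogeneous with respect to the weighted dilation $\phi_t(z_1,\ldots,z_n) = (t^{1/k_1}z_1,\ldots,t^{1/k_n}z_n)$, namely
\[
p(\phi_t(z)) = g(t\, z_1^{k_1}, \ldots, t\, z_n^{k_n}) = t^d\, p(z), \qquad t>0.
\]
Hence if $p(w_0)=0$ then $p(\phi_t(w_0))=0$ for every $t>0$. This gives a family of zeros of $p$ passing through $w_0$ at $t=1$.

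Next I would check that $\phi_t(w_0)$ sits inside the open ball for $0<t<1$. Since $\sum_i |w_i^0|^2 = 1$ and each $k_i\ge 1$, for $0<t<1$ we have $t^{2/k_i}<1$ whenever $w_i^0 \neq 0$, so
\[
|\phi_t(w_0)|^2 = \sum_{i=1}^n t^{2/k_i}\,|w_i^0|^2 < \sum_{i=1}^n |w_i^0|^2 = 1,
\]
showing $\phi_t(w_0)\in \mathbb{B}_n$. (Coordinates with $w_i^0=0$ cause no trouble because they contribute zero to the sum.) Continuity in $t$ gives $\phi_t(w_0)\to w_0$ as $t\to 1^-$, so taking $w_k = \phi_{1-1/k}(w_0)$ produces the desired sequence in $Z(p)\cap \mathbb{B}_n$ converging to $w_0$.

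I do not anticipate a serious obstacle: the only subtlety is making sure the weighted dilation makes sense (choosing positive real $k_i$-th roots) and that the ball inequality is strict for every $t<1$. The whole argument is essentially the quasi-homogeneous analogue of the obvious radial-ray argument for homogeneous $p$ (where $p(tw_0)=t^d p(w_0)=0$), replacing the isotropic ray $t\mapsto tw_0$ with the anisotropic orbit $t\mapsto \phi_t(w_0)$ dictated by the weights $(k_1,\ldots,k_n)$. With $Z(p)\cap\partial\mathbb{B}_n \subseteq \operatorname{clos}\{Z(p)\cap\mathbb{B}_n\}\cap \partial\mathbb{B}_n$ established, Proposition 4.5 immediately yields $X_p = Z(p)\cap \partial\mathbb{B}_n$.
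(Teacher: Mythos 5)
Your argument is correct and is essentially identical to the paper's: both use the anisotropic dilation $z^0\mapsto(r^{1/k_1}z^0_1,\ldots,r^{1/k_n}z^0_n)$, noting that homogeneity of $g$ preserves the zero set along this orbit and that the orbit stays in $\mathbb{B}_n$ for $0<r<1$ while tending to $z^0$ as $r\to1^-$. The only difference is cosmetic — you spell out the strict ball inequality that the paper leaves implicit.
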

         \begin{proof}
         Suppose that $p(z_1,\cdots,z_n)=g(z_1^{k_1},\cdots,z_n^{k_n})$ for some homogeneous polynomial $g$ and $k_1,\cdots,k_n\in\mathbb{N}$. For any
         $z_0=\{z^0_1,\cdots,z^0_n\}\in Z(p)\cap \partial\mathbb{B}_n$,  we have $g((z^0_1)^{k_1},\cdots,(z^0_n)^{k_n})=0$, which implies that $g(r(z^0_1)^{k_1},\cdots,r(z^0_n)^{k_n})=0$ for $0<r<1$ since $g$ is homogeneous. This means that $p(z^r)=0$ for $z^r=(r^\frac{1}{k_1}z^0_1,\cdots,r^\frac{1}{k_n}z^0_n)$, and $z^r\to z^0$, which completes the proof.
         \end{proof}

         Since $p\in\mathbb{C}[z_1,\cdots,z_n]$ defines the extension of $\mathcal{K}(\mathscr{Q}_p)$ by $C(X_p)$, we have $[p]\in K_1(X_p)$, the odd K-homology group of the compact metrizable space $X_p$ \cite{BDF1}. A basic question is to determine which element one has. In \cite{Dou} it was conjectured   that $[p]$ is the fundamental class of $X_p$ determined by the almost complex structure of $X_p\subset\partial\mathbb{B}_n$. (In \cite{Dou} the multiplicity of $p$ was not taken into account. For example,  one sees that $X_{p^2}=X_p$ for $p\in\mathbb{C}[z_1,\cdots,z_n]$ but $[p^2]=2[p]\in K_1(X_p).$ ) In \cite{GW}, the element $[p]$ is calculated for the case $p$ is homogeneous and  $n=2$ and in this case one can show that  $[p]$ equals   the fundamental class. In this case $X_p$ consists of the union of a finite number of circles. Hence $[p]\in K_1(X_p)$ is determined by the index of an appropriate  operator for each circle with the property that the fundamental class is determined by the "winding number" of the polynomial on these circles. The basic technique in \cite{GW} is to first factor $p(z_1,z_2)$ and reduce the calculation to that of a single factor.

          The proposition raises a number of questions which we now discuss briefly.

          First, is it always the case that $X_p= clos (Z(p)\cap \mathbb{B}_n) \cap \partial \mathbb{B}_n$?  This question is closely related to the question of whether $p\in\mathbb{C}[z_1,\cdots,z_n]$ with $Z(p)\cap\mathbb{B}_n=\phi$   is cyclic which was answered in the affirmative in \cite{CG}. What would be needed to solve the question here would be a technique which allows one to handle the case in which some points in $Z(p)\cap \partial \mathbb{B}_n$ are "tangent" to $\partial\mathbb{B}_n$ but others   are not.

          Moreover, we observe that the above proposition carries over to    many  more general submodules of $L^2_a(\mathbb{B}_n)$. For example, for $\phi_1,\cdots,\phi_k\in A(\mathbb{B}_n),$ the ball algebra of functions continuous on $clos(\mathbb{B}_n)$  and holomorphic on $\mathbb{B}_n$, one can see that
              $$clos (Z(\phi_1,\cdots,\phi_k)\cap \mathbb{B}_n) \cap \partial \mathbb{B}_n \subseteq X_{[\phi_1,\cdots,\phi_k]}\subseteq  Z(p)  \cap \partial \mathbb{B}_n,$$
              where $[\phi_1,\cdots,\phi_k]$ denotes the submodule of $L^2_a(\mathbb{B}_n)$ generated by $\phi_1,\cdots,\phi_k$ and $Z(\phi_1,\cdots,\phi_k)$ is the subset of $clos\mathbb{B}_n$ of common zeros of $\phi_1,\cdots,\phi_k $.  Similarly,  the question  whether the maximal ideal space $X_{[\phi_1,\cdots,\phi_k]}=clos (Z(\phi_1,\cdots,\phi_k)\cap \mathbb{B}_n) \cap \partial \mathbb{B}_n   $  is   related to the question of whether  $Z(\phi_1,\cdots,\phi_k)\cap\mathbb{B}_n=\phi$ implies $[\phi_1,\cdots,\phi_k]=L^2_a(\mathbb{B}_n)$, which is still open for the dimension $n>2$.
              The above argument also extends to other reproducing kernel Hilbert modules such as the Hardy and Drury-Arveson spaces.

              Second, the conjecture of Arveson concerns the closure of homogeneous polynomial ideals in the Drury-Arveson space. One can show in the case of homogeneous ideals, essential normality of the closure in the Hardy, Bergman and Drury-Arveson spaces are all equivalent. But this argument    doesn't work for the case of ideals generated by arbitrary   $p\in\mathbb{C}[z_1,\cdots,z_n]$. It seems likely that the argument in this paper can be generalized to obtain the same result for the Hardy   and the Drury-Arveson spaces. However, while we believe that both results hold, perhaps techniques  from   \cite{CSW,CFO}  may be needed  to complete the  proofs.

              Thirdly, in \cite{Dou} the first author offered a refined conjecture for the closure of homogeneous polynomial ideals in the Drury-Arveson space. Arveson conjectured that the commutators and cross-commutators for the operators $Q_f$ in $\mathscr{Q}_p$ were in the Schatten $p-$class for $p>n$ which we have established  in this paper for the case of principal polynomial ideals. However, in \cite{Dou}, it was conjectured that this result on the commutators  actually holds for $p>\dim Z(p)$. Although it is not clear if one can modify the proof herein to obtain this result, the question makes sense.

                Finally, it is natural to ask if the result in this paper extends to all ideals in $\mathbb{C}[z_1,\cdots, z_n]$ or even to all ideals in  $A(\mathbb{B}_n)$. One approach to this problem was discussed in \cite{DS}. A question, seemingly beyond current techniques, is whether a submodule of $L^2_a(\mathbb{B}_n)$ is essentially normal if and only if it is finitely generated. However, for the case $n=1$, the equivalence  holds with one direction following  from the Berger-Shaw Theorem \cite{BS}  and the other  from the result in \cite{ARS}.

                \vskip3mm {\bf{Acknowledgments}}\vskip3mm This research was  supported by
 NSFC (10731020,10801028),   the Department of Mathematics  at Texas A\&M University and Laboratory of Mathematics for Nonlinear Science at Fudan University.  The second author would like to thank the   host department for
its kind hospitality while the second author  visited the first author.

 \bibliographystyle{plain}

\end{document}